\newcommand{\statementtitle}[1]{\noindent\textbf{#1}\vspace{0.5em}\par} % 声明标题格式
\newtheorem{thm}{Theorem}[section]
\newtheorem{lem}[thm]{Lemma}
\newtheorem{prop}[thm]{Proposition}
\numberwithin{equation}{section}
\def\tr{\mbox{tr}}
\begin{document}

\title{\bf Monge-Amp\`ere type equation for the Nakano positive curvature tensor of holomorphic vector bundles}
\author{Changpeng Pan}

\address{Changpeng Pan\\School of Mathematics and Statistics\\
Nanjing University of Science and Technology\\
Nanjing, 210094,P.R. China\\ }
\email{mathpcp@njust.edu.cn}

%\subjclass[2020]{53C07, 57N16}
\keywords{Monge-Amp\`ere type equation, Nakano positive, holomorphic bundle}

\maketitle

\begin{abstract}
For any Hermitian holomorphic vector bundle with Nakano positive curvature tensor, Demailly introduced a Monge-Amp\`ere type equation. When the rank of the bundle is $1$, it becomes the usual Monge-Amp\`ere equation. In this paper, we solve this equation in the conformal class of a Nakano positive Hermitian metric.

\end{abstract}

\vskip 0.2 true cm

%------------------------------------------------------------------------------------%

\pagestyle{myheadings}
\markboth{\rightline {\scriptsize C. Pan}}
         {\leftline{\scriptsize Monge-Amp\`ere type equation for the Nakano positive curvature tensor}}

\bigskip
\bigskip

%------------------------------------------------------------------------------------%
%------------------------------------------------------------------------------------%

\section{ Introduction}
Let $(X,\omega)$ be a compact K\"ahler manifold. Let $(E,H)$ be a Hermitian holomorphic vector bundle over $X$. Let $\Theta_{H}=\sqrt{-1}D_{H}^{2}$ be the Chern curvature. Define a quadratic form on $TX\otimes E$ by
\begin{equation*}
\widetilde{\Theta}_{H}(\gamma)=\gamma^{i\alpha}\overline{\gamma^{j\beta}}\langle\Theta(\partial_{i},\partial_{\bar{j}})e_{\alpha},e_{\beta}\rangle_{H},\quad\quad \forall\gamma=\gamma^{i\alpha}\partial_{i}\otimes e_{\alpha}.
\end{equation*}
$(E,H)$ is called Nakano positive if the quadratic form $\widetilde{\Theta}$ induced by Chern curvature is positive. Similarly, one can define a quadratic form $^{T}{\widetilde{\Theta}}_{H}$ on $TX\otimes E^{*}$, $(E,H)$ is called dual Nakano positive if it is positive. There is also a slightly weaker condition known as Griffiths positivity (see \cite{De21}). These three positivities can all imply the ampleness of a vector bundle. However, conversely, examples \cite{De22} can be found where a vector bundle is ample but not (dual) Nakano positive. It is conjectured by Griffiths \cite{G} that ampleness is equivalent to Griffiths positivity. So far, it is only known that the Griffiths conjecture holds true in the case of line bundles \cite{U} or when the base manifold is a Riemann surface \cite{CF}.

When $\Theta_{H}$ is Nakano positive, one can define a positive $(n,n)$ form by
\begin{equation*}
{\det}_{TX\otimes E}(\Theta_{H})^{\frac{1}{r}}=\det({\Theta_{i\alpha\bar{j}}}^{\beta})_{(i\alpha),(j\beta)}^{\frac{1}{r}}(\sqrt{-1})^{n}dz^{1}\wedge d\bar{z}^{1}\cdots \wedge dz^{n}\wedge d\bar{z}^{n}.
\end{equation*}
It is well-defined, i.e., it neither depends on the choice of coordinates nor on the
choice of trivialisation for $E$. Demailly \cite{De21,De22} proposed a program to prove the Griffiths conjecture, which includes solving a new system of elliptic differential equations as following
\begin{align}
&{\det}_{TX\otimes E}(\Theta_{H}+t\tr_{E}(\Theta_{H})\otimes Id_{E})^{\frac{1}{r}}=f_{t}\omega^{n}\label{DE1},\\
&(\Theta_{H}-\frac{1}{r}\tr_{E}(\Theta_{H})\otimes Id_{E})\wedge\omega^{n-1}=g_{t},\label{DE2}
\end{align}
for $t\in(-\frac{1}{r},1]$, where $f_{t}$ is a function depends on lower-order term of $H$ and $g_{t}$ is a trace-free Hermitian Endomorphism of $E$ also depends on lower-order term of $H$. In the context of solving this system, Pingali\cite{Pin} and Mandal \cite{Ma} have each made contributions in certain simplified scenarios. However, fully resolving this system remains a significant challenge.

The equation (\ref{DE2}) is of the Hermitian-Yang-Mills type, and when $g_{t}=-\epsilon\log(K^{-1}H)\otimes\omega^{n}$, as shown by the work of Uhlenbeck-Yau \cite{UY}, it has a smooth solution. When $E$ is polystable, then we can take $\epsilon=0$. The left-hand side of the equation (\ref{DE2}) is conformally invariant. If we also take $g_{t}$ to be conformally invariant, then any solution that exists will have all its conformal metrics as solutions to the equation. If we consider equation (\ref{DE1}), then the previous discussion seems to suggest that we should solve it in the conformal class of a fixed metric.

Now let $(E,K)$ be a rank $r$ Hermitian holomorphic vector bundle which is Nakano positive. We consider the following equation
\begin{equation*}
{\det}_{TX\otimes E}(\Theta_{H})^{\frac{1}{r}}=e^{\phi}(\frac{\det K}{\det H})^{\lambda}\omega^{n},
\end{equation*}
where $\omega=\frac{1}{r}\tr_{E}(\Theta_{K})>0$, $\phi\in C^{\infty}(X)$ is a fixed function and $\lambda\geq 0$. Let
\begin{equation*}
\mathcal{H}_{K}=\{u\in C^{\infty}(X,\mathbb{R})\mid  \Theta_{K}+\sqrt{-1}\partial\bar{\partial}u\otimes Id_{E}>_{N}0\},
\end{equation*}
then $H=e^{-u}K$ is a Nakano positive metric. 
Considering the above equation in the conformal class of $K$, it becomes
\begin{equation}\label{eq:MA}
{\det}_{TX\otimes E}(\Theta_{K}+\sqrt{-1}\partial\bar{\partial}u\otimes Id_{E})^{\frac{1}{r}}=e^{\lambda r u+\phi}\omega^{n}.
\end{equation}
When $r=1$, it corresponds to the Calabi-Yau theorem, whose existence of solutions was resolved by Aubin\cite{Au} and Yau\cite{Yau}. When $r>1$, the situation becomes more complicated. Even without considering the background of this problem, from the perspective of the equation alone, the existence of its solutions remains a question worth exploring.

In this paper, we study this problem and obtain the following theorem.
\begin{thm}\label{thm1}
Let $X$ be a K\"ahler manifold and $(E,K)$ be a rank-$r$ Hermitian holomorphic vector bundle over $X$. Let $\Theta_{K}$ be Nakano positive. 
\begin{itemize}
\item[(1)] If $\lambda>0$, there is a unique smooth function $u\in\mathcal{H}_{K}$ that satisfies (\ref{eq:MA});
\item[(2)] If $\lambda=0$, there is a unique constant $c$ and a unique smooth function $u\in\mathcal{H}_{K}$ that satisfies $\sup_{X}u=0$ such that
\begin{equation*}
{\det}_{TX\otimes E}(\Theta_{K}+\sqrt{-1}\partial\bar{\partial}u\otimes Id_{E})^{\frac{1}{r}}=e^{\phi+c}\omega^{n}.
\end{equation*}
\end{itemize}
\end{thm}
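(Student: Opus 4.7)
The plan is to use the continuity method, in the spirit of the Aubin--Yau proof of the Calabi conjecture. I will dispatch (1) directly and then obtain (2) as a limit $\vps\to 0^{+}$ of equations in (1) with $\lmd=\vps$. Throughout, write
\begin{equation*}
F(u):=\frac{1}{r}\log\frac{{\det}_{TX\otimes E}(\Theta_{K}+\sqrt{-1}\pt\bp u\otimes Id_{E})}{\og^{n}},
\end{equation*}
so that (\ref{eq:MA}) reads $F(u)=\lmd r u+\phi$.

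For (1), I would consider the family $F(u_{t})=\lmd r u_{t}+(1-t)F(0)+t\phi$ for $t\in[0,1]$; then $u_{0}\equiv 0$ is a solution at $t=0$, and a solution at $t=1$ is the one we want. A direct computation gives the linearisation of $F$ at $u$,
\begin{equation*}
DF_{u}(v)=\frac{1}{r}G_{u}^{i\ol{j}}v_{i\ol{j}},\qquad G_{u}^{i\ol{j}}:=\sum_{\alpha}\bigl(\Theta_{H}^{-1}\bigr)^{i\ol{j},\alpha\ol{\alpha}},\quad H=e^{-u}K,
\end{equation*}
and Nakano positivity of $\Theta_{H}$ makes $G_{u}^{i\ol{j}}$ a positive-definite conjugate metric on $TX$, since $G_{u}^{i\ol{j}}\xi_{i}\ol{\xi_{j}}=\sum_{\alpha}\Theta_{H}^{-1}(\xi\otimes e_{\alpha},\xi\otimes e_{\alpha})>0$. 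Hence $DF_{u}-\lmd r$ is a uniformly elliptic operator with strictly negative zero-order term, so it is an isomorphism $C^{2,\alpha}\to C^{\alpha}$ by the Fredholm alternative and the maximum principle, and openness follows from the implicit function theorem. Closedness requires a priori estimates uniform in $t$: the $C^{0}$ bound follows by evaluating at the extrema of $u_{t}$, using $\sqrt{-1}\pt\bp u\le 0$ at a max to dominate $\Theta_{K}+\sqrt{-1}\pt\bp u\otimes Id_{E}$ by $\Theta_{K}$ in the Hermitian sense on $TX\otimes E$; the $C^{2}$ bound comes from applying the maximum principle to $\log\tr_{\og}(\Theta_{K}+\sqrt{-1}\pt\bp u\otimes Id_{E})-Au$ for $A\gg 1$, adapting Yau's second-order inequality and using Nakano positivity to absorb curvature cross-terms; concavity of $\log\det^{1/r}$ on positive-definite Hermitian forms then gives $C^{2,\alpha}$ via Evans--Krylov; Schauder bootstrap yields $C^{\infty}$. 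Uniqueness for $\lmd>0$ follows by evaluating $F(u)-F(v)=\lmd r(u-v)$ at a max of $u-v$: concavity of $F$ forces the left side $\le 0$, so $u\le v$, and reversing gives $u=v$.

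For (2), let $u_{\vps}$ denote the solution of (1) with $\lmd=\vps>0$, and set $v_{\vps}:=u_{\vps}-\sup_{X}u_{\vps}$ and $c_{\vps}:=\vps r\sup_{X}u_{\vps}$, so that $\sup_{X}v_{\vps}=0$ and
\begin{equation*}
{\det}_{TX\otimes E}(\Theta_{K}+\sqrt{-1}\pt\bp v_{\vps}\otimes Id_{E})^{1/r}=e^{\vps r v_{\vps}+\phi+c_{\vps}}\og^{n}.
\end{equation*}
Integrating bounds $c_{\vps}$ uniformly, while an $\vps$-uniform $C^{0}$ bound on $v_{\vps}$ can be extracted by a Moser iteration tied to the Sobolev inequality for $G_{v_{\vps}}$, in the spirit of Yau's argument. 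The higher-order estimates from (1) then pass through uniformly, and a subsequence of $(v_{\vps},c_{\vps})$ converges in $C^{\infty}\times\mathbb{R}$ to a solution $(u,c)$ with $\sup_{X}u=0$. Uniqueness of $(u,c)$ reduces again to the concavity/max-principle argument together with the sup normalisation.

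The hardest step will be the $C^{2}$ estimate: because the determinant mixes the $TX$ and $E$ indices, the trace taken with $G_{u}^{i\ol{j}}$ couples $\sqrt{-1}\pt\bp u\otimes Id_{E}$ to the full curvature $\Theta_{K}$, and Nakano positivity (rather than merely Griffiths positivity) should be exactly what is needed to control the signs of the resulting curvature cross-terms. A secondary obstacle is the $\vps$-independent $C^{0}$ estimate in (2), for which the Sobolev constants associated to the varying metric $G_{v_{\vps}}$ must be bounded in terms of $(X,\og)$ and $(E,K)$ alone.
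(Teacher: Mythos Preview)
Your continuity-method skeleton and the openness argument match the paper, and your $C^{0}$ estimate for $\lambda>0$ via the maximum principle is exactly right. But the heart of the proof is the $C^{2}$ estimate, and here your proposal has a genuine gap.

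You propose to run Yau's test function $\log\tr_{\omega}(\Theta_{K}+\sqrt{-1}\partial\bar\partial u\otimes Id_{E})-Au$ and say that ``Nakano positivity \ldots\ should be exactly what is needed to control the signs of the resulting curvature cross-terms.'' The difficulty is not a sign issue that positivity fixes: the background form $\Theta_{0}=\Theta_{K}$ on $TX\otimes E$ is \emph{not} K\"ahler in any sense, so even after choosing normal coordinates for $\omega$ and an orthonormal frame for $K$ one cannot make $\partial_{k}\Theta_{0,i\alpha\overline{j\beta}}$ vanish at the reference point. When you differentiate the equation twice, the quadratic third-order term $\Theta_{u}^{-1}(\partial_{1}\Theta_{u})\Theta_{u}^{-1}(\partial_{\bar 1}\Theta_{u})$ therefore contains unavoidable cross terms between $\partial_{1}\Theta_{0}$ and $\partial_{1}(\partial\bar\partial u)$. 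After a Young inequality with parameter $\epsilon_{1}$ these produce a term of the form $-C\epsilon_{1}^{-1}\lambda_{1}^{-2}\mathcal{S}$ (with $\mathcal{S}=\tr\Theta_{u}^{-1}$), while at the same time the good third-order term is weakened by a factor $(1-\epsilon_{1})$. In the classical K\"ahler case these cross terms are absent and the third-order contributions cancel on the nose; here they do not, and the plain $-Au$ barrier is not enough to close the inequality. The paper deals with this by the Hou--Ma--Wu test function
\[
\log(\kappa+u_{1\bar 1})+\varphi(|du|_{g}^{2})+\psi(u),
\]
using the gradient term $\varphi(|du|^{2})$ (with $\varphi''=(\varphi')^{2}$) together with the first-order condition at the maximum to absorb the defective third-order piece, and coupling $\epsilon_{1}$ to the constant $A$ in $\psi$. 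The outcome is only
\[
|\partial\bar\partial u|\le C\bigl(1+\sup_{X}|du|_{g}^{2}\bigr),
\]
not a direct $C^{2}$ bound. Consequently a separate $C^{1}$ estimate is needed, which the paper obtains by a blow-up argument and the Dinew--Ko\l odziej Liouville theorem for maximal psh functions on $\mathbb{C}^{n}$. Your outline omits the $C^{1}$ step entirely; without it, and without a replacement for the Hou--Ma--Wu mechanism, the $C^{2}$ estimate does not close.

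For $\lambda=0$, your $\varepsilon\to 0^{+}$ limiting scheme is a reasonable alternative to the paper's direct continuity path with an unknown constant $c_{t}$ (handled \`a la Tosatti--Weinkove via a Gauduchon metric in the conformal class of $G_{u}$). However, your proposed $\varepsilon$-uniform $C^{0}$ bound by ``Moser iteration tied to the Sobolev inequality for $G_{v_{\varepsilon}}$'' goes in the wrong direction: you have no a priori control on the Sobolev constant of the solution-dependent metric $G_{v_{\varepsilon}}$. The correct observation is that $\tr_{E}\Theta_{u}>0$ forces $\omega+\sqrt{-1}\partial\bar\partial u>0$, so $u$ is $\omega$-psh; the paper then uses $\Delta_{\omega}u>-2n$ to get an $L^{p}$ bound and the Alexandrov--Bakelman--Pucci maximum principle (after B\l ocki and Sz\'ekelyhidi) for the $\inf u$ bound. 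Any $\varepsilon$-independent $C^{0}$ argument should run through the fixed metric $\omega$, not through $G_{v_{\varepsilon}}$.
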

To prove Theorem \ref{thm1}, we employ the method of continuity. The key to the proof is to derive the $C^{0}$, $C^{1}$ and $C^{2}$ estimates, with the most challenging part being the $C^{2}$ estimate. The reason why the $C^{2}$ estimate is difficult is that after taking the second derivative of the equation and separating out the good third-order term, a zero-order term with a bad coefficient appears. After obtaining the above estimates, we can use Evans-Krylov's method \cite{Tr} to get the $C^{2,\alpha}$ estimate, and then improve the regularity of $u$ to $C^{\infty}$ using the Schauder estimates for elliptic equations.

Recently, George \cite{Ge} studied the solutions of the Monge-Amp\`ere equation for the $(p,p)$-form. Formally, the equations we study are very similar. However, our approach to handling the second-order estimate is quite different.

This paper is organized as follows. In Section \ref{sec:Pre}, we provide some basic properties about equation (\ref{eq:MA}) and prove its ellipticity. In Section \ref{sec:C0}, we provide the $C^{0}$ estimate. In Section \ref{sec:C2}, we provide the $C^{2}$ estimate which includes the gradient term. In Section \ref{sec:C1}, we provide the $C^{1}$ estimate based on Liouville theorem. In Section \ref{sec:EK}, we establish the $C^{2,\alpha}$ estimate based on the complex version of the Evans-Krylov estimate proposed by Trudinger.

\section{Preliminaries}\label{sec:Pre}
 First, we transform the equation into
\begin{equation*}
L(u):=\log\frac{\det_{TX\otimes E}(\Theta_{K}+\sqrt{-1}\partial\bar{\partial}u\otimes Id_{E})^{\frac{1}{r}}}{\omega^{n}}-\lambda r u=\phi.
\end{equation*}
For simplicity, let $\Theta_{0}=\Theta_{K}$ and $\Theta_{u}=\Theta_{K}+\sqrt{-1}\partial\bar{\partial}u\otimes Id_{E}$. Furthermore, we do not distinguish between $\Theta$ and the quadratic form $\widetilde{\Theta}$ induced by it. Let
\begin{equation*}
\Theta=\Theta_{i\alpha\bar{j\beta}}dz^{i}\otimes e^{\alpha}\otimes d\bar{z}^{j}\otimes \overline{e^{\beta}}, \quad \Theta^{-1}=\Theta^{i\alpha\bar{j\beta}}\partial_{i}\otimes e_{\alpha}\otimes \partial_{\bar{j}}\otimes \overline{e_{\beta}},
\end{equation*}
where $\Theta^{-1}$ is the inverse quadratic form on $T^{*}X\otimes E^{*}$. Take $\{e_{\alpha}\}$ be an orthonormal basis with respect to the metric $K$.
Then
\begin{equation*}
\begin{split}
\delta L(u)(v)=&\frac{1}{r}\sum_{\alpha=1}^{r}\Theta_{u}^{i\alpha\overline{j\alpha}}\partial_{i}\partial_{\bar{j}}v-\lambda rv\\
=&\frac{1}{r}\alpha^{i\bar{j}}_{u}\partial_{i}\bar{\partial}_{j}v-\lambda rv
\end{split}
\end{equation*}
is elliptic, since $\alpha^{i\bar{j}}_{u}=\sum_{\alpha}\Theta_{u}^{i\alpha\overline{j\alpha}}$ is positive.

\subsection{Continuity method and openness: $\lambda>0$}
When $\lambda>0$, let $\phi_{1}=\log\frac{\det_{TX\otimes E}(\Theta_{K})^{\frac{1}{r}}}{\omega^{n}}$ and $\phi_{t}=\phi+t(\phi_{1}-\phi)$. Then $L(u)=\phi_{t}$ is solvable at $t=1$. 

We can show that $\delta L(u):C^{k+2,\alpha}(X)\to C^{k,\alpha}(X)$ is an isomorphism. Since the inverse of $\alpha_{u_{t_{0}}}^{i\bar{j}}$ defines a Hermitian metric on $TX$, there exists a Gauduchon metric $\hat{\alpha}$ in its conformal class. Suppose $\hat{\alpha}=e^{\mu}\alpha$. Consider the new operator 
\begin{equation*}
T(v)=e^{-\mu}\delta L(u)(v)=\frac{1}{r}\hat{\alpha}^{i\bar{j}}\partial_{i}\partial_{\bar{j}}v-\lambda r e^{-\mu} v.
\end{equation*}
Then by the maximum principle, $\mbox{Ker}(T)=\mbox{Ker}(\delta L(u))=\{0\}$. Indeed, if $x\in X$ is a maximum point of $v$--the kernel function of $T$, then $v(x)=\frac{1}{\lambda r^{2}}\alpha_{u}^{i\bar{j}}\partial_{i}\partial_{\bar{j}}v(x) \leq 0$. Similarly, at a minimum point $y$ of $v$, we have $v(y) \geq 0$. Therefore, $v = 0$. On the other hand, through direct computation, it is not difficult to obtain \begin{equation*}
T^*(v)=\frac{1}{r}\hat{\alpha}^{i\bar{j}}\partial_{i}\partial_{\bar{j}}v+\frac{n\partial v\wedge\bar{\partial}\hat{\alpha}^{n-1}}{\hat{\alpha}^{n}}-\frac{n\bar{\partial} v\wedge\partial\hat{\alpha}^{n-1}}{\hat{\alpha}^{n}}-\lambda r e^{-\mu}v.
\end{equation*}
Thus, it follows from the maximum principle that $\mbox{Ker}(T^*) = \{0\}$. Then by Theorem 2.13 in \cite{S0}, $T:C^{k+2,\alpha}(X)\to C^{k,\alpha}(X)$ is an isomorphism, and so is $\delta L(u)$.

Similar to the discussion in \cite{S0}, we have
\begin{lem}
If $L(u_{t})=\phi_{t}$ has a smooth solution for some $t>0$, then for all small $\epsilon>0$ there is a smooth function $u_{t-\epsilon}$ such that $L(u_{t-\epsilon})=\phi_{t-\epsilon}$.
\end{lem}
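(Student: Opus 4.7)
The plan is to apply the Implicit Function Theorem in Banach spaces to $L$ viewed as a map between H\"older spaces, leveraging the invertibility of $\delta L(u_t)$ that was already established just above the lemma. First I would fix integers $k\geq 0$ and an exponent $\alpha\in(0,1)$, and introduce the H\"older analogue of $\mathcal{H}_K$,
$$
\mathcal{H}_K^{k+2,\alpha}=\{u\in C^{k+2,\alpha}(X)\mid \Theta_K+\sqrt{-1}\partial\bar\partial u\otimes Id_E>_N 0\}.
$$
This is an open subset of $C^{k+2,\alpha}(X)$, because Nakano positivity is an open pointwise condition and $u\mapsto \sqrt{-1}\partial\bar\partial u$ is continuous from $C^{k+2,\alpha}(X)$ into $C^{k,\alpha}$. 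On this open set $L:\mathcal{H}_K^{k+2,\alpha}\to C^{k,\alpha}(X)$ is a smooth nonlinear map of Banach manifolds, since $\det^{1/r}$ and $\log$ are smooth functions of a positive Hermitian matrix of curvature coefficients.

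Next, by hypothesis $u_t\in\mathcal{H}_K\subset \mathcal{H}_K^{k+2,\alpha}$, and the preceding paragraph established that $\delta L(u_t):C^{k+2,\alpha}(X)\to C^{k,\alpha}(X)$ is a Banach-space isomorphism. The Implicit Function Theorem then furnishes an open neighborhood $V$ of $\phi_t=L(u_t)$ in $C^{k,\alpha}(X)$ and a smooth local inverse $V\to\mathcal{H}_K^{k+2,\alpha}$ of $L$ near $u_t$. Because $\phi_{t-\epsilon}=\phi+(t-\epsilon)(\phi_1-\phi)$ depends smoothly on $\epsilon$, it lies in $V$ for all sufficiently small $\epsilon>0$, producing some $u_{t-\epsilon}\in C^{k+2,\alpha}(X)\cap\mathcal{H}_K^{k+2,\alpha}$ with $L(u_{t-\epsilon})=\phi_{t-\epsilon}$.

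Finally, I would upgrade $u_{t-\epsilon}$ from $C^{k+2,\alpha}$ to $C^\infty$ by a standard elliptic bootstrap. Since the equation is uniformly elliptic at $u_{t-\epsilon}$ (the linearization there is the operator $\tfrac{1}{r}\alpha_{u_{t-\epsilon}}^{i\bar j}\partial_i\bar\partial_j - \lambda r$, which has positive symbol because $\Theta_{u_{t-\epsilon}}$ is Nakano positive) and $\phi_{t-\epsilon}$ is smooth, differentiating the equation in a local holomorphic frame and applying the Schauder estimates to the linear elliptic equation satisfied by each partial derivative of $u_{t-\epsilon}$ raises the regularity two orders at a step, yielding $u_{t-\epsilon}\in C^\infty(X)$.

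The only subtle points are verifying that $\mathcal{H}_K^{k+2,\alpha}$ is open in $C^{k+2,\alpha}$ and that $L$ is smooth there, but both reduce to elementary facts about smooth functions of positive Hermitian matrices. The genuine analytic ingredient, invertibility of $\delta L(u_t)$ via the Gauduchon conformal change and the maximum principle, has already been carried out; consequently I expect no real obstacle in this openness step, and the lemma should follow in a manner essentially parallel to the corresponding argument in \cite{S0}.
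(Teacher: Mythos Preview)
Your proposal is correct and follows essentially the same approach as the paper: the paper does not spell out a proof but simply refers to the standard implicit function theorem argument in \cite{S0}, relying on the invertibility of $\delta L(u)$ established in the preceding paragraph. Your write-up is precisely that argument, including the bootstrap to $C^\infty$, so there is nothing to add.
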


\subsection{Continuity method and openness: $\lambda=0$}
When $\lambda=0$. Consider the following equations for $t\in[0,1]$
\begin{equation}\label{eq:log}
\begin{split}
L(u_{t})=\phi_{t}+c_{t},
\end{split}
\end{equation}
where $c_{1}=0$. Then it has a solution for $t=1$. Let $J\subset [0,1]$ be the parameter space for which Equation \eqref{eq:log} is solvable. By Tosatti-Weinkove's \cite{TW} argument, we can show that $J$ is open in $[0,1]$.

Suppose it has a solution $u_{t_{0}}$ at $t=t_{0}$ for some constant $c_{t_{0}}$. Let $\hat{\alpha}$ be a Gauduchon metric in the conformal class of $\alpha_{u_{t_0}}$, suppose $e^{\mu}$ be its conformal factor. Choose a function $\sigma_{t_{0}}\in C^{\infty}(M)$ such that
\begin{equation*}
e^{\sigma_{t_0}}{\det}_{TX\otimes E}(\Theta_{u_{t_{0}}})^{\frac{1}{r}}=ce^{-\mu}\hat{\alpha}^{n}.
\end{equation*}
for some $c>0$ and $\int_{X}e^{\sigma_{t_0}}\det_{TX\otimes E}(\Theta_{u_{t_{0}}})^{\frac{1}{r}}=1$. Consider the following modified equations
\begin{equation}\label{eq:mlog}
\begin{split}
L(\hat{u}_{t})=\phi_{t}+\log\Big(\frac{\int_{X}e^{\sigma_{t_0}}\Omega_{\hat{u}_{t}}}{\int_{X}e^{\phi_{t}+\sigma_{t_0}}\omega^{n}}\Big),
\end{split}
\end{equation}
where $\Omega_{\hat{u}_{t}}=\det_{TX\otimes E}(\Theta_{\hat{u}_{t}})^{\frac{1}{r}}$. Since $c_{t_{0}}=\log\Big(\frac{\int_{X}e^{\sigma_{t_0}}\Omega_{u_{t_{0}}}}{\int_{X}e^{\phi_{t}+\sigma_{t_0}}\omega^{n}}\Big)$, then $u_{t_{0}}$ solves \eqref{eq:mlog} at $t=t_{0}$. It is enough to show that \eqref{eq:mlog} has a solution near $t = t_0$. Then we can take $u_{t}=\hat{u}_{t}$ and $c_{t}=\log\Big(\frac{\int_{X}e^{\sigma_{t_0}}\Omega_{\hat{u}_{t}}}{\int_{X}e^{\phi_{t}+\sigma_{t_0}}\omega^{n}}\Big)$ to obtain the solution of \eqref{eq:log} near $t=t_0$. Define
\begin{equation*}
\begin{split}
H_{k}=\{v\in C^{k,\alpha}\mid \int_{X}e^{v+\sigma_{t_0}}\Omega_{u_{t_{0}}}=1\},
\end{split}
\end{equation*}
and 
\begin{equation*}
\begin{split}
\tilde{H}_{k+2}=\{u\in C^{k+2,\alpha}\mid \Theta_{u}>_{N}0,\ \int_{X}ue^{\sigma_{t_0}}\omega^{n}=0\}.
\end{split}
\end{equation*}
Define a map $\tilde{L}:\tilde{H}_{k+2}\to H_{k}$ by
\begin{equation*}
\begin{split}
\tilde{L}(u)=\log\frac{\det_{TX\otimes E}(\Theta_{u})^{\frac{1}{r}}}{\det_{TX\otimes E}(\Theta_{u_{t_{0}}})^{\frac{1}{r}}}-\log(\int_{X}e^{\sigma_{t_0}}{\det}_{TX\otimes E}(\Theta_{u})^{\frac{1}{r}})
\end{split}
\end{equation*}
Then the equation (\ref{eq:mlog}) becomes
\begin{equation*}
\begin{split}
\tilde{L}(\hat{u}_{t})=(\phi_{t}-\phi_{t_{0}})+\Big[\log\Big(\frac{1}{\int_{X}e^{\phi_{t}+\sigma_{t_{0}}}\omega^{n}}\Big)-\log\Big(\frac{\int_{X}e^{\sigma_{t_0}}\Omega_{u_{t_{0}}}}{\int_{X}e^{\phi_{t_{0}}+\sigma_{t_0}}\omega^{n}}\Big)\Big]
\end{split}
\end{equation*}
Let $v=\tilde{L}(u)$, the tangent space at $v$ is
\begin{equation*}
T_{v}H_{k}=\{\xi\in C^{k,\alpha}\mid \int_{X}\xi e^{\sigma_{t_0}+v}\Omega_{u_{t_{0}}}=0\}.
\end{equation*}
The tangent space at $u\in \tilde{H}_{k+2}$ is
\begin{equation*}
T_{u}\tilde{H}_{k+2}=\{\eta\in C^{k+2,\alpha}\mid  \int_{X}\eta e^{\sigma_{t_0}}\omega^{n}=0\}.
\end{equation*}
Since $\tilde{L}(u_{t_{0}})=0$ and $\hat{\alpha}$ is a Gauduchon metric, the linearized operator of $\tilde{L}$ at $u_{t_{0}}$ is 
\begin{equation*}
\begin{split}
\delta\tilde{L}(u_{t_{0}}):T_{u_{t_{0}}}\tilde{H}_{k+2}&\to T_{0}H_{k}\\
\eta&\mapsto \frac{1}{r}\alpha^{i\bar{j}}_{u_{t_{0}}}\eta_{i\bar{j}}.
\end{split}
\end{equation*}
Then if $\eta\in T_{u_{t_{0}}}\tilde{H}_{k+2}$ satisfies $\delta \tilde{L}(u_{t_{0}})(\eta)=0$, it must be $0$. On the other hand, let $\xi \in T_{0}H^{k}$ and consider the following equation
\begin{equation*}
\frac{1}{r}\hat{\alpha}^{i\bar{j}}\eta_{i\bar{j}}=e^{-\mu}\xi.
\end{equation*}
Since $\int_{X}\xi e^{\sigma_{t_{0}}}\Omega_{u_{t_{0}}}=c\int_{X}\xi e^{-\mu}\hat{\alpha}^{n}=0$, the above equation has a unique solution up to a constant. In particular, there exists a unique $\eta \in T_{u_{t_{0}}}\tilde{H}_{k+2}$ such that $\delta \tilde{L}(u_{t_{0}})(\eta)= \xi$. Thus, $\delta\tilde{L}(u_{t_0})$ is surjective and hence an isomorphism. Therefore, by the implicit function theorem in Banach spaces, equation (\ref{eq:mlog}) has a solution near $t_{0}$.

\begin{lem}
If $L(u)=\phi_{t}+c_{t}$ has a smooth solution for some $t>0$ and $c_{t}$, then for all small $\epsilon>0$ there is a constant $c_{t-\epsilon}$ and a $u_{t-\epsilon}$ such that $L(u_{t-\epsilon})=\phi_{t-\epsilon}+c_{t-\epsilon}$.
\end{lem}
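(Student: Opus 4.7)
The plan is to prove openness by applying the implicit function theorem to the modified equation \eqref{eq:mlog} constructed just before the lemma. The core observation is that a solution $\hat{u}_t$ of \eqref{eq:mlog} near $t_0$ immediately yields a solution pair $(u_t,c_t)$ of \eqref{eq:log} via $u_t=\hat{u}_t$ and $c_t=\log\bigl(\int_X e^{\sigma_{t_0}}\Omega_{\hat{u}_t}\big/\int_X e^{\phi_t+\sigma_{t_0}}\omega^n\bigr)$, so it suffices to solve \eqref{eq:mlog} on a small neighborhood of $t_0$.

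First I would set up the Banach manifold structure. The domain $\tilde{H}_{k+2}$ and codomain $H_k$ are already defined as $C^{k+2,\alpha}$ and $C^{k,\alpha}$ slices cut out by the normalizations $\int_X u\,e^{\sigma_{t_0}}\omega^n=0$ and $\int_X e^{v+\sigma_{t_0}}\Omega_{u_{t_0}}=1$ respectively. The map $\tilde L$ of the excerpt lands in $H_k$ by its very definition (the subtracted logarithm renormalizes the integral to $1$), and $\tilde L(u_{t_0})=0$. The right-hand side of \eqref{eq:mlog}, viewed as a curve in $H_k$ through $0$ at $t=t_0$, is smooth in $t$, so the implicit function theorem in Banach spaces will produce the desired $\hat u_t$ provided that the linearization $\delta\tilde L(u_{t_0}):T_{u_{t_0}}\tilde H_{k+2}\to T_0 H_k$ is a Banach space isomorphism.

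Next I would verify the isomorphism property of $\delta\tilde L(u_{t_0})(\eta)=\tfrac{1}{r}\alpha^{i\bar j}_{u_{t_0}}\eta_{i\bar j}$, which is exactly the argument sketched just above the statement and which I would expand as follows. For injectivity, the maximum principle applied to $\eta$ shows that any solution of $\alpha^{i\bar j}_{u_{t_0}}\eta_{i\bar j}=0$ is constant; the normalization $\int_X\eta\,e^{\sigma_{t_0}}\omega^n=0$ then forces $\eta\equiv 0$. For surjectivity, rewriting the equation in terms of the Gauduchon representative $\hat\alpha=e^\mu\alpha_{u_{t_0}}$ gives $\tfrac{1}{r}\hat\alpha^{i\bar j}\eta_{i\bar j}=e^{-\mu}\xi$; the Gauduchon condition makes the $L^2$ cokernel of this operator the constants, so solvability is equivalent to $\int_X e^{-\mu}\xi\,\hat\alpha^n=0$, which is precisely the constraint $\int_X\xi\,e^{\sigma_{t_0}}\Omega_{u_{t_0}}=0$ defining $T_0 H_k$ (up to the positive constant $c$ from $e^{\sigma_{t_0}}\Omega_{u_{t_0}}=c\,e^{-\mu}\hat\alpha^n$). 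The solution is unique modulo constants, and fixing the constant by $\int_X\eta\,e^{\sigma_{t_0}}\omega^n=0$ yields a well-defined preimage in $T_{u_{t_0}}\tilde H_{k+2}$.

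The hard part here is not the implicit function theorem itself but the careful choice of the weight $\sigma_{t_0}$ matching the Gauduchon conformal factor $e^{-\mu}$, which is what makes the integral constraints on domain and codomain compatible with the linearized PDE. Once this matching is arranged, ellipticity of $\alpha^{i\bar j}_{u_{t_0}}\partial_i\partial_{\bar j}$ together with standard Schauder theory upgrades the algebraic isomorphism to a bounded linear isomorphism of Hölder spaces, and the implicit function theorem yields a smooth family $\hat u_t$ of solutions to \eqref{eq:mlog} for $|t-t_0|$ small. Elliptic regularity then promotes each $\hat u_t$ to $C^\infty$, completing the openness step.
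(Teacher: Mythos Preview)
Your proposal is correct and follows essentially the same route as the paper: the openness is obtained by applying the implicit function theorem to the modified equation \eqref{eq:mlog}, with the linearization $\delta\tilde L(u_{t_0})$ shown to be an isomorphism via the maximum principle (injectivity) and the Gauduchon solvability criterion (surjectivity), exactly as in the discussion preceding the lemma. The only additions you make are the explicit remarks about Schauder theory and elliptic bootstrapping to $C^\infty$, which the paper leaves implicit.
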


\subsection{Curvature tensor expressions in various basis}
Let $\omega=\sqrt{-1}g_{i\bar{j}}dz^{i}\wedge d\bar{z}^{j}$. For any point $x\in X$, suppose $g_{i\bar{j}}(x)=\delta_{ij}$. Then $\{\partial_{i}\otimes e_{\alpha}\}$ forms an orthonormal basis for $TX\otimes E$ with respect to $\omega\otimes K$. On the other hand,
we can take another orthonormal basis $\{\gamma_{a}\}_{a=1}^{nr}$ of $TX\otimes E$ with respect to metric $\omega \otimes K$, such that
\begin{equation*}
\Theta_{u}=\sum_{a}\Lambda_{a}\gamma^{a}\otimes\overline{\gamma^{a}},
\end{equation*}
where $0<\Lambda_{a}\in\mathbb{R}$ for any $a\in\{1,2\cdots,nr\}$.
Let $\gamma_{a}=\gamma_{a}^{i\bar{\alpha}}\partial_{i}\otimes e_{\alpha}$, then $\sum_{a}\gamma^{i\alpha}_{a}\overline{\gamma_{a}^{j\beta}}=\delta_{ij}\delta_{\alpha\beta}$ and $\sum_{i,\alpha}\gamma_{a}^{i\alpha}\overline{\gamma_{b}^{i\alpha}}=\delta_{ab}$. We have 
\begin{equation*}
\begin{split}
&\Theta_{u,i\alpha\overline{j\beta}}=\sum_{a}\overline{\gamma_{a}^{i\alpha}}\gamma_{a}^{j\beta}\Lambda_{a},\\
&\Lambda_{a}=\sum_{i,\alpha,j,\beta}\gamma_{a}^{i\alpha}\Theta_{u,i\alpha\overline{j\beta}}\overline{\gamma_{a}^{j\beta}},
\end{split}
\end{equation*}
and $L(u)=\log(\Pi_{a}\Lambda_{a})^{\frac{1}{r}}-\lambda r u$. Suppose $\sqrt{-1}\partial\bar{\partial}u(x)=\sum_{i}\lambda_{i}\sqrt{-1}dz^{i}\wedge d\bar{z}^{i}$, then
\begin{equation*}
\begin{split}
&\Lambda_{a}=\sum_{i,\alpha,j,\beta}\gamma_{a}^{i\alpha}\Theta_{0,i\alpha\overline{j\beta}}\overline{\gamma_{a}^{j\beta}}+\sum_{i,\alpha}|\gamma_{a}^{i\alpha}|^{2}\lambda_{i},\\
&\Theta_{0,i\alpha\overline{i\alpha}}+\lambda_{i}=\sum_{a}|\gamma_{a}^{i\alpha}|^{2}\Lambda_{a}.
\end{split}
\end{equation*}
From this equation, it can be seen that $\Lambda_{a}$ and $\lambda_{i}$ can mutually control each other.

\section{$C^{0}$-estimates and uniqueness}\label{sec:C0}

In this section, we provide $C^{0}$ estimates for the cases when $\lambda>0$ and $\lambda=0$, using different methods.

\subsection{$\lambda>0$ case}
In this case, it is easy to obtain the $C^{0}$ estimate using the maximum principle. Assume that $u$ reaches its maximum value at point $x\in X$, then $\sqrt{-1}\partial\bar{\partial}u(x)\leq 0$ and the quadratic form $\sqrt{-1}\partial\bar{\partial}u\otimes Id$ is non-positive at $x$. So
\begin{equation*}
\begin{split}
{\det}_{TX\otimes E}(\Theta_{0}+\sqrt{-1}\partial\bar{\partial}u\otimes Id_{E})(x)\leq& {\det}_{TX\otimes E}(\Theta_{0})(x).
\end{split}
\end{equation*}
 Then
 \begin{equation*}
 e^{\lambda r u(x)+\phi(x)}\leq \frac{{\det}_{TX\otimes E}(\Theta_{0})^{\frac{1}{r}}}{\omega^{n}}(x),
 \end{equation*}
which implies
\begin{equation*}
\begin{split}
\sup_{X}u\leq \frac{1}{\lambda r}(\sup_{X}\log\frac{{\det}_{TX\otimes E}(\Theta_{0})^{\frac{1}{r}}}{\omega^{n}}-\inf_{X}\phi).
\end{split}
\end{equation*}
Similarly, taking the minimum value point of $u$, we get
\begin{equation*}
\begin{split}
\inf_{X}u\geq \frac{1}{\lambda r}(\inf_{X}\log\frac{{\det}_{TX\otimes E}(\Theta_{0})^{\frac{1}{r}}}{\omega^{n}}-\sup_{X}\phi).
\end{split}
\end{equation*}
Therefore, we get the following lemma.
\begin{lem}
Let $u$ be a solution of (\ref{eq:MA}). If $\lambda>0$, then
\begin{equation*}
\begin{split}
\sup_{X}|u|\leq \frac{1}{\lambda r}(\sup_{X}\Big|\log\frac{{\det}_{TX\otimes E}(\Theta_{0})^{\frac{1}{r}}}{\omega^{n}}\Big|+\sup_{X}|\phi|).
\end{split}
\end{equation*}
\end{lem}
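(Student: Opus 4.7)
The plan is to prove the bound purely by the maximum principle applied to equation~(\ref{eq:MA}), using the monotonicity of $\det_{TX\otimes E}(\cdot)^{1/r}$ with respect to the positive-semidefinite order on Hermitian quadratic forms on $TX\otimes E$.

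First I would bound $\sup_X u$ from above. At a maximum point $x_{0}\in X$ of $u$, the $(1,1)$-form $\sqrt{-1}\partial\bar\partial u(x_{0})$ is non-positive, so the induced quadratic form $\sqrt{-1}\partial\bar\partial u\otimes\mathrm{Id}_{E}$ on $T_{x_{0}}X\otimes E_{x_{0}}$ is non-positive. Hence $\Theta_{u}(x_{0})\leq \Theta_{0}(x_{0})$ as positive Hermitian operators on the $nr$-dimensional space $T_{x_{0}}X\otimes E_{x_{0}}$, and the Löwner monotonicity of the determinant gives
\begin{equation*}
{\det}_{TX\otimes E}(\Theta_{u})^{1/r}(x_{0})\leq {\det}_{TX\otimes E}(\Theta_{0})^{1/r}(x_{0}).
\end{equation*}
Plugging this into (\ref{eq:MA}) at $x_{0}$ and taking logarithms yields a pointwise upper bound on $\lambda r u(x_{0})+\phi(x_{0})$; since $u(x_{0})=\sup_{X}u$ and $\lambda r>0$, dividing by $\lambda r$ produces the desired upper bound on $\sup_{X}u$ in terms of $\sup_{X}\log\tfrac{\det_{TX\otimes E}(\Theta_{0})^{1/r}}{\omega^{n}}$ and $\inf_{X}\phi$.

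For the lower bound I would run the same argument at a minimum point $y_{0}$ of $u$, where $\sqrt{-1}\partial\bar\partial u(y_{0})\geq 0$ and therefore $\Theta_{u}(y_{0})\geq \Theta_{0}(y_{0})$; the inequalities reverse and, again dividing by $\lambda r$, one obtains the lower bound for $\inf_{X}u$. Combining the two one-sided bounds gives the $C^{0}$ estimate stated in the lemma.

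The argument is essentially elementary; the only point to verify carefully is the monotonicity $A\leq B\Rightarrow \det A^{1/r}\leq \det B^{1/r}$ for positive Hermitian forms on $TX\otimes E$, which is a standard linear-algebra fact once one works on the $nr$-dimensional fibre $T_{x}X\otimes E_{x}$. The genuine conceptual constraint---rather than a technical obstacle---is the indispensable use of $\lambda>0$: it is exactly the factor $\lambda r$ that converts the pointwise inequality on $\lambda r u+\phi$ into a two-sided bound on $u$ itself. When $\lambda=0$ this mechanism breaks down, which is why the $\lambda=0$ case requires the separate treatment given in the next subsection.
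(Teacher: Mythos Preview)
Your proposal is correct and follows essentially the same approach as the paper: apply the maximum principle at an extremum of $u$, use monotonicity of $\det_{TX\otimes E}$ with respect to the Löwner order to compare $\Theta_u$ with $\Theta_0$, plug into (\ref{eq:MA}), and divide by $\lambda r$. The paper's argument is identical, including the observation that $\lambda>0$ is what allows the division step, with the $\lambda=0$ case handled separately.
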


By using a similar method, the uniqueness of the solution can be obtained.

\begin{prop}
When $\lambda>0$, the solution of (\ref{eq:MA}) is unique.
\end{prop}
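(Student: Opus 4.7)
The plan is to mimic the maximum-principle argument already used for the $C^{0}$ estimate in the $\lambda>0$ case. Suppose $u_{1},u_{2}\in\mathcal{H}_{K}$ are two smooth solutions of \eqref{eq:MA}. Set $w=u_{1}-u_{2}$ and let $x_{0}\in X$ be a point where $w$ attains its maximum. Then $\sqrt{-1}\partial\bar\partial w(x_{0})\leq 0$, so the quadratic form $\sqrt{-1}\partial\bar\partial w\otimes Id_{E}$ is non-positive at $x_{0}$, which gives the pointwise inequality
\begin{equation*}
\Theta_{u_{1}}(x_{0})=\Theta_{u_{2}}(x_{0})+\sqrt{-1}\partial\bar\partial w\otimes Id_{E}(x_{0})\leq \Theta_{u_{2}}(x_{0})
\end{equation*}
as Hermitian forms on $T_{x_{0}}X\otimes E_{x_{0}}$.

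Since both forms are positive, monotonicity of the determinant on the positive cone yields $\det_{TX\otimes E}(\Theta_{u_{1}})^{1/r}(x_{0})\leq \det_{TX\otimes E}(\Theta_{u_{2}})^{1/r}(x_{0})$. Plugging this into \eqref{eq:MA} at $x_{0}$ gives
\begin{equation*}
e^{\lambda r u_{1}(x_{0})+\phi(x_{0})}\omega^{n}(x_{0})\leq e^{\lambda r u_{2}(x_{0})+\phi(x_{0})}\omega^{n}(x_{0}),
\end{equation*}
and since $\lambda>0$, this forces $u_{1}(x_{0})\leq u_{2}(x_{0})$, i.e.\ $w(x_{0})\leq 0$. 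Because $x_{0}$ was the maximum of $w$, we conclude $w\leq 0$ on $X$, so $u_{1}\leq u_{2}$ everywhere. Interchanging the roles of $u_{1}$ and $u_{2}$ gives the reverse inequality, hence $u_{1}=u_{2}$.

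There is no real obstacle here: the only ingredient beyond the $C^{0}$ argument already carried out is the monotonicity $A\leq B\Rightarrow \det A\leq \det B$ for positive Hermitian endomorphisms of $TX\otimes E$, which follows by simultaneous diagonalization (or, in the notation of Section \ref{sec:Pre}, from the fact that the eigenvalues $\Lambda_{a}$ of $\Theta_{u}$ depend monotonically on the Hermitian form). The strict positivity of $\lambda$ is essential, as it converts the inequality on determinants directly into an inequality on $u_{1}-u_{2}$; this is precisely why the $\lambda=0$ case requires the additional normalization $\sup_{X}u=0$ in the statement of Theorem \ref{thm1}.
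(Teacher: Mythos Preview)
Your argument is correct and is precisely the ``similar method'' the paper refers to: apply the maximum principle to $w=u_{1}-u_{2}$ exactly as in the $C^{0}$ estimate, using monotonicity of $\det_{TX\otimes E}$ on the Nakano-positive cone to turn $\sqrt{-1}\partial\bar\partial w(x_{0})\leq 0$ into $u_{1}(x_{0})\leq u_{2}(x_{0})$. The paper does not spell this out, so you have simply written down the intended proof.
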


\subsection{$\lambda=0$ case} In this case, we use the Alexandrov-Bakelman-Pucci maximum principle to derive the $C^{0}$ estimate. This method was used in the Monge-Amp\`ere equation \cite{B} and later extended to general fully nonlinear equations \cite{S}.

\begin{lem}
Suppose $u$ is solution to (\ref{eq:MA}), and $\sup_{X}u=0$. Then there exists a constant $C$ depends on the background data, such that
\begin{equation*}
\sup_{X}|u|<C.
\end{equation*}
\end{lem}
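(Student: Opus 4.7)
The strategy is to bound $L := -\inf_X u \geq 0$ from above via the Alexandrov-Bakelman-Pucci (ABP) maximum principle, adapting Błocki's method \cite{B} for the complex Monge-Ampère equation and Székelyhidi's extension \cite{S} to fully nonlinear equations on complex manifolds. Let $x_0 \in X$ attain $u(x_0) = -L$, and fix a holomorphic coordinate chart identifying a neighborhood of $x_0$ with a Euclidean ball $B \subset \mathbb{C}^n$ of fixed radius, chosen small enough that $\omega$, $K$, and $\Theta_K$ are uniformly comparable to constant-coefficient model forms on $B$.

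On $B$, I introduce the auxiliary function $v(z) = u(z) + \epsilon|z|^2$ with $\epsilon > 0$ small and depending only on background data, and shift by a constant so that $v \geq 0$ on $\partial B$ while $v$ is still significantly negative at $x_0$. The ABP inequality in $\mathbb{R}^{2n}$ then yields
\begin{equation*}
L^{2n} \leq C_n \int_{E} \det{}_{\mathbb{R}}(D^2 v)\, dx,
\end{equation*}
where $E \subset B$ is the contact set on which $v$ coincides with its lower convex envelope. On $E$, $D^2 v \geq 0$ as a real symmetric matrix, so in particular $\sqrt{-1}\partial\bar\partial u \geq -\epsilon\,\omega_{\mathrm{Eucl}}$ there, and a Błocki-type comparison bounds $\det_{\mathbb{R}}(D^2 v)$ on $E$ by a constant multiple of $(\det_\mathbb{C}(\partial\bar\partial v))^2$.

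To control $\det_\mathbb{C}(\partial\bar\partial v)$ pointwise on $E$, I would invoke the identity from Section \ref{sec:Pre},
\begin{equation*}
\Theta_{0,i\alpha\overline{i\alpha}} + \lambda_i = \sum_a |\gamma_a^{i\alpha}|^2 \Lambda_a,
\end{equation*}
relating the eigenvalues $\lambda_i$ of $\sqrt{-1}\partial\bar\partial u$ to the Nakano eigenvalues $\Lambda_a$ of $\Theta_u$. On $E$ we have $\lambda_i \geq -\epsilon$, while equation (\ref{eq:MA}) with $\lambda = 0$ fixes $\prod_a \Lambda_a$ up to the bounded factor $e^{r\phi}$. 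Nakano positivity forces $\Lambda_a > 0$; the lower bound on each $\lambda_i$ propagates through the above identity to a uniform positive lower bound on each $\Lambda_a$, and then the pinned value of the product gives a uniform upper bound on each $\Lambda_a$ and hence on each $\lambda_i$. Feeding this back into the ABP integral gives $L^{2n} \leq C|B|$, bounding $L$ by background data alone.

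The main obstacle is precisely this last conversion step. In the scalar Monge-Ampère case the equation directly prescribes $\det_\mathbb{C}(\partial\bar\partial u)$, so the ABP integrand is immediately controlled on the contact set; here equation (\ref{eq:MA}) prescribes the Nakano determinant on the rank-$nr$ bundle $TX \otimes E$, which entangles the $\lambda_i$ with the background tensor $\Theta_0$ in a genuinely non-diagonal way. Upgrading the one-sided bound $\lambda_i \geq -\epsilon$ on the contact set into the two-sided bound needed to apply ABP hinges on the mutual control between the $\lambda_i$ and the $\Lambda_a$ coming from Nakano positivity, together with the rigidity provided by the pinned product $\prod_a \Lambda_a$.
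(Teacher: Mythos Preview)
Your overall plan mirrors the paper's, and the step you flag as the main obstacle---converting the prescribed Nakano determinant into a pointwise bound on $\det(u_{i\bar j})$ over the contact set---is in fact fine: your eigenvalue argument works (lower bounds $\lambda_i\ge -\epsilon$ plus Nakano positivity of $\Theta_0$ force $\Lambda_a\ge c_0-\epsilon$, and then the pinned product bounds each $\Lambda_a$, hence each $\lambda_i$, from above). The paper does this more cleanly via superadditivity of $A\mapsto(\det A)^{1/nr}$, splitting $\Theta_u=(\Theta_0-2\epsilon\omega_0\otimes Id_E)+(\sqrt{-1}\partial\bar\partial u+2\epsilon\omega_0)\otimes Id_E$ with both summands Nakano positive on the contact set, so that $e^{\phi/n}$ directly dominates $(\det(u_{i\bar j}+2\epsilon\delta_{ij}))^{1/n}$.

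The genuine gap is earlier, at the ABP step. You claim that after shifting $v=u+\epsilon|z|^2$ by a constant you can arrange $v\ge 0$ on $\partial B$ while $v(x_0)$ remains of order $-L$. That requires $\inf_{\partial B}u\ge -C$ with $C$ independent of $L$, which is precisely what you are trying to prove; a priori $u$ could be close to $-L$ on all of $\partial B$. ABP controls only $\inf_{\partial B}v-v(x_0)$, which is guaranteed to be at least $\epsilon$ but has no reason to be of order $L$, so your displayed inequality $L^{2n}\le C_n\int_E\det D^2v$ does not follow and the conclusion $L^{2n}\le C|B|$ is unjustified. The paper supplies the missing ingredient: Nakano positivity of $\Theta_u$, traced over $E$, gives $\omega+\sqrt{-1}\partial\bar\partial u>0$, hence $\Delta_\omega u>-2n$, and the argument of \cite{S} then yields a uniform bound on $\|u\|_{L^p}$ for some $p>0$. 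One runs ABP with the fixed gap $\epsilon$ to obtain $c_0\epsilon^{2n}\le C\cdot\mathrm{Vol}(P)$ after bounding the integrand; since $|v|\gtrsim L$ on the contact set $P$, Chebyshev together with the $L^p$ bound gives $\mathrm{Vol}(P)\le C/L^{p}$, and combining the two bounds $L$.
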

\begin{proof}
By the assumption on $\sup_{X} u$, it suffices to show that $\inf_{X}u$ is bounded below. Since $\Theta_{0}+\sqrt{-1}\partial\bar{\partial}u\otimes Id>_{N}0$, then $\omega+\sqrt{-1}\partial\bar{\partial}u>0$ which implies $\Delta_{\omega}u>-2n$. By the same argument in \cite{S}, we get a uniform bound for $\|u\|^{p}$ for some $p>0$. 

Assume that $u$ attains its minimum at point $x\in X$. Choose a local coordinate chart $(z^{1},\cdots,z^{n})$ that is homeomorphic to the unit ball $B_{1}(0)$, such that $x$ corresponds to $0$. Let $v(z)=u(z)+\epsilon |z|^{2}$, then $v(0)=u(0)=\inf_{B_{1}(0)} u\leq \inf_{\partial B_{1}(0)}v-\epsilon$. Let
\begin{equation*}
P=\{x\in B_{1}(0)|\text{$|Dv|(x)<\frac{\epsilon}{2}$ and $v(y)\geq v(x)+Dv(x)\cdot(y-x)$ for $\forall y\in B_{1}(0)$}\}.
\end{equation*}
Then apply the Alexandroff-Bakelman-Pucci maximum principle (\cite[Prop 11]{S}) to $v$, we obtain
\begin{equation*}
c_{0}\epsilon^{n}\leq \int_{P}\det D^{2}v,
\end{equation*}
where $c_{0}$ only depends on $n$. As in Blocki \cite{B}, at any point $x\in P$ we have $D^{2}v(x)\geq 0$ and
\begin{equation*}
\det(D^{2}v)\leq 2^{2n} \det(v_{i\bar{j}})^{2}.
\end{equation*}
So if $x\in P$, then $D^{2}v(x)\geq 0$ implies that $u_{i\bar{j}}(x)\geq-\epsilon\delta_{i\bar{j}}$. Let $\omega_{0}=\sum_{i}\sqrt{-1}dz^{i}\wedge d\bar{z}^{i}$. Choose $\epsilon$ small enough such that $\Theta_{0}-2\epsilon\omega_{0}\otimes Id_{E}\geq\frac{1}{2} \Theta_{0}$ (depends on $\Theta_{0}$). Since the function $A\mapsto(\det A)^{1/n}$ is concave on the cone of positive hermitian $(n\times n)$-matrices, then at every point $x\in P$, we have
\begin{equation*}
\begin{split}
e^{\frac{1}{n}\phi}=&\Big(\frac{\det(\Theta_{0}-2\epsilon\omega_{0}\otimes Id_{E}+(\sqrt{-1}\partial\bar{\partial}u+2\epsilon\omega_{0})\otimes Id_{E})^{\frac{1}{r}}}{\omega^{n}}\Big)^{\frac{1}{n}}\\
\geq &\Big(\frac{\det(\Theta_{0}-2\epsilon\omega_{0}\otimes Id_{E})^{\frac{1}{r}}}{\omega^{n}}\Big)^{\frac{1}{n}}+\Big(\frac{\det((\sqrt{-1}\partial\bar{\partial}u+2\epsilon\omega_{0})\otimes Id_{E})^{\frac{1}{r}}}{\omega^{n}}\Big)^{\frac{1}{n}}\\
\geq &\Big(\frac{\det((\sqrt{-1}\partial\bar{\partial}u+2\epsilon\omega_{0})\otimes Id_{E})^{\frac{1}{r}}}{\omega^{n}}\Big)^{\frac{1}{n}}.
\end{split}
\end{equation*}
This implies $|u_{i\bar{j}}|<C$. This also gives a bound for $v_{i\bar{j}}$ at any $x\in P$. Then
\begin{equation}\label{eq:ABP}
c_{0}\epsilon^{n}\leq C^{'}\mbox{Vol}(P).
\end{equation}
By definition, for $x\in P$ we have $v(0)>v(x)-\frac{\epsilon}{2}$, and so $v(x)<L+\frac{\epsilon}{2}$. Let $L=\inf_{X} u$, then
\begin{equation*}
\mbox{Vol}(P)\leq \frac{\||v|^{p}\|_{L^{1}}}{|L+\frac{\epsilon}{2}|^{p}}.
\end{equation*}
Since $\||v|^{p}\|_{L^{1}}$ have a uniform bound, combining this with (\ref{eq:ABP}), we obtain a uniform bound for $|L|$.
\end{proof}

\begin{prop}
Suppose that for constants $c_i$ (where $i=1,2$), there exist smooth functions $u_i$ satisfying the equation $\det_{TX\otimes E}\left(\Theta_K + \sqrt{-1}\partial\bar{\partial}u_i\otimes Id_{E}\right)^{\frac{1}{r}} = e^{\phi+c_i}\omega^n$. Then $c_1 = c_2$ and $u_1 - u_2$ is a constant.
\end{prop}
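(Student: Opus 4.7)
The plan is to invoke the maximum principle twice: once on the zeroth-order part of the difference to force $c_1=c_2$, and then once on the linearization along a convex combination to force $u_1-u_2$ to be constant. Set $w=u_1-u_2$. At a point $x_0$ where $w$ attains its maximum, $\sqrt{-1}\partial\bar{\partial}w(x_0)\le 0$, hence $\Theta_{u_1}(x_0)\le\Theta_{u_2}(x_0)$ as Hermitian forms on $TX\otimes E$. Since the determinant is monotone on the cone of positive Hermitian $nr\times nr$ matrices, this yields $\det_{TX\otimes E}(\Theta_{u_1})^{\frac{1}{r}}(x_0)\le\det_{TX\otimes E}(\Theta_{u_2})^{\frac{1}{r}}(x_0)$, and after dividing by $e^{\phi}\omega^{n}$ we conclude $c_1\le c_2$. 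Running the same argument at a minimum point of $w$ gives $c_1\ge c_2$, so $c_1=c_2$.

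With $c_1=c_2$ in hand, consider the convex combination $u_t=(1-t)u_2+tu_1$ for $t\in[0,1]$. Nakano positivity is a pointwise convex condition on Hermitian forms, so $\mathcal{H}_{K}$ is convex and $u_t\in\mathcal{H}_{K}$ for every $t\in[0,1]$; in particular each $\alpha_{u_t}^{i\bar{j}}=\sum_{\alpha}\Theta_{u_t}^{i\alpha\overline{j\alpha}}$ defines a smooth positive-definite Hermitian metric on $TX$. Using the linearization formula of Section~\ref{sec:Pre} (with $\lambda=0$) and the fundamental theorem of calculus, I obtain
\begin{equation*}
0\;=\;L(u_1)-L(u_2)\;=\;\int_{0}^{1}\delta L(u_t)(w)\,dt\;=\;\tilde{a}^{i\bar{j}}\,\partial_{i}\partial_{\bar{j}}w,
\qquad \tilde{a}^{i\bar{j}}:=\frac{1}{r}\int_{0}^{1}\alpha_{u_t}^{i\bar{j}}\,dt \;>\;0.
\end{equation*}
Thus $w$ satisfies a linear second-order elliptic equation with no zeroth-order term on the compact manifold $X$. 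By the strong maximum principle, $w$ must be constant, which completes the proof.

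No serious obstacle appears; the argument is much softer than the a priori estimates of the subsequent sections. The only point deserving attention is the positivity of $\alpha_{u_t}$ along the whole segment $t\in[0,1]$, which is precisely what convexity of $\mathcal{H}_{K}$ delivers. Were this positivity to fail at some intermediate $t$, the averaged coefficient $\tilde{a}^{i\bar{j}}$ could degenerate and the strong maximum principle would be inapplicable; but Nakano positivity is preserved under convex combinations, so this issue does not arise.
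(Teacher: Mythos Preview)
Your argument is correct. The first part (showing $c_1=c_2$ via the maximum and minimum of $w$) matches the paper's reasoning essentially verbatim; the paper phrases it as a contradiction assuming $c_1>c_2$, but the content is identical.

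For the second part the paper takes a slightly different route: instead of linearizing along the segment $u_t$, it applies the arithmetic--geometric mean inequality directly to the pair $\Theta_{u_1},\Theta_{u_2}$ to obtain
\[
1+\frac{1}{nr}\,\alpha^{i\bar j}_{u_2}\,w_{i\bar j}
=\frac{\tr_{TX\otimes E}(\Theta_{u_2}^{-1}\Theta_{u_1})}{nr}
\ge\Big(\frac{\det_{TX\otimes E}\Theta_{u_1}}{\det_{TX\otimes E}\Theta_{u_2}}\Big)^{1/nr}=1,
\]
so $\alpha^{i\bar j}_{u_2}w_{i\bar j}\ge 0$ and the (strong) maximum principle forces $w$ constant. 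Your path-integral linearization yields an honest elliptic \emph{equation} $\tilde a^{i\bar j}w_{i\bar j}=0$ rather than an inequality, at the modest price of checking that $\mathcal H_K$ is convex so that $\alpha_{u_t}$ stays positive along the whole segment --- a point you correctly address. The paper's AM--GM approach avoids tracking the family $u_t$ altogether and uses only $\Theta_{u_2}^{-1}$ as coefficient, which is marginally more direct; your approach is equally standard and arguably cleaner in that it lands on an equation, making the strong maximum principle immediately applicable without a separate argument for the reverse inequality.
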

\begin{proof}
If $c_1 \neq c_2$, without loss of generality, assume that $c_1 > c_2$. Let $v = u_1 - u_2$, and let $x\in X$ be the point where $v$ attains its maximum value. Then $\sqrt{-1}\partial\bar{\partial}v(x) \leq 0$, so
\begin{equation*}
e^{\phi(x)+c_1} = \frac{\det_{TX\otimes E}(\Theta_K+\sqrt{-1}\partial\bar{\partial}u_{1})^{\frac{1}{r}}}{\omega^{n}}\Big|_x \leq \frac{\det_{TX\otimes E}(\Theta_K+\sqrt{-1}\partial\bar{\partial}u_{2})^{\frac{1}{r}}}{\omega^{n}}\Big|_x = e^{\phi(x)+c_2},
\end{equation*}
which implies $c_1 \leq c_2$. This leads to a contradiction. Hence $c_1=c_2$.

By the mean value inequality, it follows that
\begin{equation*}
1+\frac{1}{nr}\alpha^{i\bar{j}}_{u_{2}}v_{i\bar{j}}=\frac{\tr_{TX\otimes E}(\Theta_{u_{2}}^{-1}\Theta_{u_{1}})}{nr}\geq \Big(\frac{\det_{TX\otimes E}(\Theta_K+\sqrt{-1}\partial\bar{\partial}u_{1})}{\det_{TX\otimes E}(\Theta_K+\sqrt{-1}\partial\bar{\partial}u_{2})}\Big)^{\frac{1}{nr}}=1.
\end{equation*}
By the maximum principle, we conclude that $v$ is a constant.
\end{proof}

%%%%%%%%%%%%%%%%%%%%%%%%%%%%%%%%%%%%%%%%%%%%%%%%%

\section{$C^{2}$-estimate}\label{sec:C2}
In this section, we prove the following $C^{2}$-estimate that includes a gradient term. We will adopt Hou-Ma-Wu's method \cite{HMW} to prove the following lemma.
\begin{lem}
Suppose $u$ is a solution of equation (\ref{eq:MA}), then we have
\begin{equation*}
|\partial\bar{\partial}u|\leq C(1+\sup_{X}|du|_{g}^{2})
\end{equation*}
where the constant depends on the background data, in particular $|Rm(\omega)|$, $\|\Theta_{0}\|_{C^{2}}$, $\sup|\phi|$ and $\sup|u|$.
\end{lem}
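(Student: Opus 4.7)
The approach is the Hou-Ma-Wu test-function method, adapted to the Hermitian form $\Theta_u$ on $TX\otimes E$. Let $\Lambda_1$ denote the largest eigenvalue of $\Theta_u$ with unit eigenvector $\gamma_1$, and set
$$Q = \log\Lambda_1 + \varphi(|\nabla u|_g^2) + \psi(u),$$
with $\varphi(s)=-\tfrac12\log(1-s/(2K))$ for $K$ slightly larger than $\sup_X|\nabla u|_g^2$ (so that $\varphi',\varphi''>0$ and $\varphi''=2(\varphi')^2$), and $\psi(u)=-Au$ for a large constant $A$ to be chosen last. Let $x_0\in X$ be a maximiser of $Q$; at $x_0$ diagonalise $\Theta_u$ in the $\omega\otimes K$-orthonormal basis $\{\gamma_a\}_{a=1}^{nr}$ of the Preliminaries, and write $\mathcal{L}=\tfrac{1}{r}\alpha_u^{i\bar j}\partial_i\partial_{\bar j}$ for the linearised operator. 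The second-order condition $\mathcal{L}Q(x_0)\le 0$ is the inequality I will exploit, while $dQ(x_0)=0$ supplies the identity that substitutes the term $|\partial\Lambda_1|^2/\Lambda_1^2$ appearing in $\mathcal{L}(\log\Lambda_1)$ in terms of $\varphi'\partial|\nabla u|^2$ and $\psi' du$.

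The main computation is to expand $\mathcal{L}(\log\Lambda_1)$ and combine it with $\partial_k\partial_{\bar k}$ of the equation $\tfrac{1}{r}\sum_a\log\Lambda_a=\lambda ru+\phi$. First-order perturbation theory for the eigenvalue gives
$$\partial_k\partial_{\bar k}\Lambda_1 = \gamma_1^{i\alpha}\overline{\gamma_1^{j\beta}}\,\partial_k\partial_{\bar k}\Theta_{u,i\alpha\overline{j\beta}} + 2\sum_{a\ge 2}\frac{|\gamma_1^{i\alpha}\overline{\gamma_a^{j\beta}}\partial_k\Theta_{u,i\alpha\overline{j\beta}}|^2}{\Lambda_1-\Lambda_a},$$
and the twice-differentiated equation yields
$$\tfrac{1}{r}\sum_a\Lambda_a^{-1}\gamma_a^{i\alpha}\overline{\gamma_a^{j\beta}}\,\partial_k\partial_{\bar k}\Theta_{u,i\alpha\overline{j\beta}}=\lambda r\,u_{k\bar k}+\phi_{k\bar k}+\tfrac{1}{r}\sum_{a,b}\frac{|\gamma_a^{i\alpha}\overline{\gamma_b^{j\beta}}\partial_k\Theta_{u,i\alpha\overline{j\beta}}|^2}{\Lambda_a\Lambda_b},$$
the last term being the non-negative \emph{good} third-order quantity from the concavity of $\log\det$. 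Substituting $\Theta_u=\Theta_0+\sqrt{-1}\partial\bar\partial u\otimes Id_E$ and commuting covariant derivatives by the Ricci identity (with error terms bounded by $|Rm(\omega)|$ and $\|\Theta_0\|_{C^2}$), the fourth-order derivatives of $u$ match and cancel between $\mathcal{L}(\Lambda_1)$ and the twice-differentiated equation after weighting by $\Lambda_1^{-1}\alpha_u^{i\bar j}$, and the residual third-order contributions are absorbed by the good concavity term together with the positive $\varphi''|\partial|\nabla u|^2|^2$ piece from $\mathcal{L}(\varphi(|\nabla u|^2))$, via the Cauchy-Schwarz inequality $|\partial_k|\nabla u|_g^2|^2\le C|\nabla u|_g^2\sum_p(|u_{pk}|^2+|u_{\bar pk}|^2)$ and a sufficiently large choice of $K$.

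The hardest step, as the introduction indicates, is the zero-order residue with bad coefficient of order $\Lambda_1$ that survives after these cancellations. Two sources contribute: the $\gamma_1^{i\alpha}\overline{\gamma_1^{j\beta}}\partial_k\partial_{\bar k}\Theta_{0,i\alpha\overline{j\beta}}$ piece of $\partial_k\partial_{\bar k}\Lambda_1$, and the Ricci commutator $u_{i\bar j k\bar k}-u_{k\bar k i\bar j}\sim R\cdot u_{i\bar j}$ picked up when matching fourth-order derivatives, each producing a term of size $(\|\Theta_0\|_{C^2}+|Rm(\omega)|)\cdot\Lambda_1$ in $\mathcal{L}(\Lambda_1)$, equivalently $\tfrac{C}{r}\sum_a\Lambda_a^{-1}$ in $\mathcal{L}(\log\Lambda_1)$ after dividing by $\Lambda_1$. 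The plan is to absorb this via $\psi(u)=-Au$: direct computation gives
$$\mathcal{L}(\psi(u)) = -A\,\mathcal{L}(u) = -\tfrac{A}{r}\bigl(nr-\tr_{TX\otimes E}(\Theta_u^{-1}\Theta_0)\bigr),$$
and since $\Theta_0$ is strictly Nakano positive, $\tr_{TX\otimes E}(\Theta_u^{-1}\Theta_0)\ge c_0\sum_a\Lambda_a^{-1}$ for some constant $c_0>0$ depending only on $\Theta_0$. Choosing $K$ first to handle the gradient cross-terms, and then $A$ large depending on $K$, $\|\Theta_0\|_{C^2}$, $|Rm(\omega)|$ and $\sup|\phi|$, the positive quantity $\tfrac{Ac_0}{r}\sum_a\Lambda_a^{-1}$ dominates the bad residue, forcing $\Lambda_1(x_0)\le C(1+\sup_X|\nabla u|_g^2)$. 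The mutual control between the $\Lambda_a$ and $\lambda_i+\Theta_{0,i\alpha\overline{i\alpha}}$ recorded in the Preliminaries then converts this bound on the leading eigenvalue into the stated estimate on $|\partial\bar\partial u|$.
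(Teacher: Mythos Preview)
Your outline follows the Hou--Ma--Wu template, but two choices break the argument in ways that the paper's proof is specifically designed to avoid.

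\textbf{The linear barrier $\psi(u)=-Au$ does not close.} You correctly note that the first--order condition $dQ(x_0)=0$ lets you substitute
\[
\frac{|\partial\Lambda_1|_u^2}{\Lambda_1^2}=\bigl|\varphi'\partial|\nabla u|_g^2+\psi'\partial u\bigr|_u^2,
\]
and with $\varphi''=2(\varphi')^2$ the $\varphi'$--piece is absorbed. But the $(\psi')^2|\partial u|_u^2=A^2|\partial u|_u^2$ contribution survives, and with $\psi''=0$ there is nothing to kill it. Estimating $|\partial u|_u^2\le C\mathcal S\,|\nabla u|_g^2\le CK\mathcal S$ gives a bad term of size $A^2K\mathcal S$, while your absorbing term from $-A\mathcal L(u)$ is only $\tfrac{Ac_0}{r}\mathcal S$; for large $A$ this is the wrong way around. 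This is precisely why Hou--Ma--Wu (and the paper) take $\psi(t)=-A\log\bigl(1+\tfrac{t}{2L}\bigr)$, so that $\psi''=\tfrac{1}{A}(\psi')^2>0$ and one can choose the Cauchy--Schwarz parameter $\delta\le\tfrac{1}{A+1}$ to make $\psi''|\partial u|_u^2$ absorb $\tfrac{\delta}{1-\delta}(\psi')^2|\partial u|_u^2$.

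\textbf{Working with $\Lambda_1$ instead of $\lambda_1=\kappa+u_{1\bar 1}$ mismatches the linearisation.} The operator $\mathcal L$ contracts base indices with $\alpha_u^{k\bar l}=\sum_\alpha\Theta_u^{k\alpha\overline{l\alpha}}$, which is the \emph{$TX$--trace} of $\Theta_u^{-1}$. When you try to cancel the bad term $-\tfrac{1}{r\Lambda_1^2}\alpha_u^{k\bar l}\partial_k\Lambda_1\partial_{\bar l}\Lambda_1$ against the concavity term of the equation, the latter (after contracting with your $A_1^{p\bar q}=\sum_\alpha\gamma_1^{p\alpha}\overline{\gamma_1^{q\alpha}}$ to match fourth--order derivatives) carries an $A_1$--contraction on the base indices, not an $\alpha_u$--contraction; the two do not line up. The paper sidesteps this by maximising $\log(\kappa+u_{1\bar 1})$ on $TX$, differentiating the equation in the base direction $\partial_1$, and using the key inequality $\Lambda_1\le\lambda_1$ to bound the concavity term from below by $\tfrac{1-\epsilon_1}{\lambda_1}\sum_k\alpha_u^{i\bar j}u_{1\bar k i}u_{\bar 1 k\bar j}$. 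The $\epsilon_1$--loss here, coming from splitting off the $\partial\Theta_0$ contribution by Cauchy--Schwarz, is the genuine ``bad zero--order term'' $\tfrac{C_3(1/\epsilon_1-1)}{\lambda_1^2}\mathcal S$; its coefficient is $A$--dependent (since $\epsilon_1=\tfrac{1}{1+A}$), and the paper absorbs it by first assuming $\lambda_1\ge\sqrt{6LC_3/C_6}$ and then choosing $A$ accordingly. Your description of the bad term as simply ``of size $(\|\Theta_0\|_{C^2}+|Rm|)\Lambda_1$'' misses this coupling.
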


\begin{proof}
Take an orthonormal basis $\{e_{\alpha}\}_{\alpha=1}^{r}$ of $(E,K)$. Define 
\begin{equation*}
\kappa=\sup_{x\in X}\sup_{\alpha}\sup_{0\neq\xi\in T_{x}X}\frac{\Theta_{0}(\xi\otimes e_{\alpha},\xi\otimes e_{\alpha})}{|\xi|_{g}^{2}}.
\end{equation*}
Let $K=\sup_{X}(1+|du|_{g}^{2})$, $L=\sup_{X}(1+|u|)$ and $S^{1}X$ be the unit tangent vector bundle with respect to $\omega$. Then 
\begin{equation*}
W(x,\xi)=\log(\kappa+u_{i\bar{j}}\xi^{i}\overline{\xi^{j}})+\varphi(|d u|_{g}^{2})+\psi(u)
\end{equation*}
is well defined on $S^{1}X$ since $\Theta_{u}>_{N}0$. Here 
\begin{equation*}
\varphi(t)=-\log\Big(1-\frac{t}{2K}\Big)
\end{equation*}
defined on $[0,\sup |d u|_{g}^{2}]$ satisfy $\varphi^{''}=(\varphi^{'})^{2}$ and
\begin{equation*}
\frac{1}{2K}\leq\phi^{'}(t)\leq \frac{1}{K},
\end{equation*}
Similarly,  
\begin{equation*}
\psi(t)=-A\log\Big(1+\frac{t}{2L}\Big)
\end{equation*}
defined on $[\inf u,\sup u]$ for a large constant $A>0$ to be defined later. It satisfies $\frac{A}{3L}<-\psi^{'}(t)<\frac{A}{L}$ and $\psi^{''}=\frac{1}{A}(\psi^{'})^{2}$. 

Suppose $W(x,\xi)$ attains its maximum at $x_{0}\in X$ and $\xi_{0}\in T_{x_{0}}X$. We can choose a local normal coordinate $\{z^{i}\}$ near $x_{0}$, such that 
\begin{equation*}
u_{i\bar{j}}(x_{0})=u_{i\bar{i}}(x_{0})\delta_{ij},\quad g_{i\bar{j}}(x_{0})=\delta_{ij},\quad \partial_{i}g_{j\bar{k}}(x_{0})=0.
\end{equation*}

Let $\lambda_{i}=\kappa+u_{i\bar{i}}$ and assume $\lambda_{n}\leq\cdots \leq \lambda_{1}$. So we know $\xi_{0}=\partial_{1}|_{x_{0}}$. Let
\begin{equation*}
\xi=g_{1\bar{1}}^{-1}\partial_{1},
\end{equation*}
it is a smooth unit vector field defined on the neighborhood of $x_{0}$. 

The function
\begin{equation*}
\begin{split}
h(z)=\log(\kappa+g_{1\bar{1}}^{-1}u_{1\bar{1}})+\varphi(|d u|_{g}^{2})+\psi(u).
\end{split}
\end{equation*}
is well defined in a small neighborhood of $x_{0}$ and achieves its maximum at $x_{0}$. It is easy to see that
\begin{equation*}
\alpha^{i\bar{j}}_{u}=\sum_{\alpha=1}^{r}\Theta_{u}^{i\alpha\overline{j\alpha}},
\end{equation*}
is a Hermitian metric on $T^{*}X$. At the point $x_{0}$, we have
\begin{equation}\label{eq:1}
\begin{split}
\alpha_{u}^{i\overline{j}}\partial_{i}\partial_{\bar{j}}\log(\kappa+g_{1\bar{1}}^{-1}u_{1\bar{1}})=&\alpha_{u}^{i\overline{j}}\frac{\partial_{i}\partial_{\bar{j}}(g_{1\bar{1}}^{-1}u_{1\bar{1}})}{\lambda_{1}}-\alpha_{u}^{i\overline{j}}\frac{\partial_{i}(g_{1\bar{1}}^{-1}u_{1\bar{1}})\partial_{\bar{j}}(g_{1\bar{1}}^{-1}u_{1\bar{1}})}{\lambda_{1}^{2}}\\
=&\alpha_{u}^{i\overline{j}}\frac{u_{i\bar{j}1\bar{1}}}{\lambda_{1}}+\alpha_{u}^{i\overline{j}}\frac{u_{1\bar{1}}\partial_{i}\partial_{\bar{j}}g_{1\bar{1}}^{-1}}{\lambda_{1}}-\alpha_{u}^{i\overline{j}}\frac{u_{i\bar{1}1}u_{\bar{j}1\bar{1}}}{\lambda_{1}^{2}}.
\end{split}
\end{equation}

Suppose $\{\gamma_{a}^{i\alpha}\}$ is a unitary matrix with respect to $\omega\otimes K$ and $\Theta_{u}^{i\alpha\overline{j\beta}}=\sum_{a}\gamma^{j\beta}_{a}\Lambda_{a}^{-1}\overline{\gamma_{a}^{i\alpha}}$ where $\Lambda_{1}\geq\cdots\geq\Lambda_{rn}>0$ are eigenvalues of $\Theta_{u}$.
For any $\{x_{i\bar{j}}\}\in M^{n\times n}$, we have
\begin{equation}
\begin{split}
\sum_{\alpha,\beta}\Theta^{i\alpha\overline{j\beta}}_{u}\Theta_{u}^{k\beta\overline{l\alpha}}x_{k\bar{j}}\overline{x_{l\bar{i}}}=&\sum_{\alpha,\beta}\gamma^{j\beta}_{a}\Lambda_{a}^{-1}\overline{\gamma^{i\alpha}_{a}}\gamma^{l\alpha}_{b}\Lambda_{b}^{-1}\overline{\gamma^{k\beta}_{b}}x_{k\bar{j}}\overline{x_{l\bar{i}}}\\
=&\Lambda_{a}^{-1}\Lambda_{b}^{-1}(\overline{\sum_{\alpha}\gamma^{i\alpha}_{a}\overline{\gamma^{l\alpha}_{b}}x_{l\bar{i}}})\sum_{\beta}(\gamma^{j\beta}_{a}\overline{\gamma^{k\beta}_{b}}x_{k\bar{j}})\\
\geq& 0
\end{split}
\end{equation}
and
\begin{equation*}
\Lambda_{1}=\gamma_{1}^{j\beta}\overline{\gamma_{1}^{i\alpha}}\Theta_{u,i\alpha\overline{j\beta}}=(\gamma_{1}^{j\beta}\overline{\gamma_{1}^{i\alpha}}\Theta_{0,i\alpha\overline{j\beta}}-\kappa)+\sum_{i,\alpha}|\gamma_{1}^{i\alpha}|^{2}(u_{i\bar{i}}+\kappa)\leq \lambda_{1},
\end{equation*}
here we use $\gamma_{1}^{j\beta}\overline{\gamma_{1}^{i\alpha}}\Theta_{0,i\alpha\overline{j\beta}}\leq \kappa$.

Using equation (\ref{eq:MA}), at the point $x_{0}$ we have
\begin{equation}\label{eq:eq1}
\begin{split}
\lambda r u_{i}+\phi_{i}=&\partial_{i}\log\frac{\det(\Theta_{u})^{\frac{1}{r}}}{\omega^{n}}\\
=&\frac{1}{r}\Theta_{u}^{k\alpha\overline{l\beta}}\partial_{i}\Theta_{u,k\alpha\overline{l\beta}}-\partial_{i}\log\det(g_{i\bar{j}})\\
=&\frac{1}{r}\Theta_{u}^{k\alpha\overline{l\beta}}\partial_{i}\Theta_{0,k\alpha\overline{l\beta}}+\frac{1}{r}\alpha_{u}^{k\overline{l}}u_{ik\bar{l}}
\end{split}
\end{equation}
and
\begin{equation}\label{eq:eq2}
\begin{split}
r\partial_{1}\partial_{\bar{1}}\log\det(\Theta_{u})^{\frac{1}{r}}=&-\Theta_{u}^{i\alpha\overline{j\beta}}\partial_{1}\Theta_{u,k\gamma\overline{j\beta}}\Theta_{u}^{k\gamma\overline{l\delta}}\partial_{\bar{1}}\Theta_{u,i\alpha\overline{l\delta}}+\Theta_{u}^{i\alpha\overline{j\beta}}\partial_{1}\partial_{\bar{1}}\Theta_{u,i\alpha\overline{j\beta}}\\
=&-\Theta_{u}^{i\alpha\overline{j\beta}}\partial_{1}\Theta_{u,k\gamma\overline{j\beta}}\Theta_{u}^{k\gamma\overline{l\delta}}\partial_{\bar{1}}\Theta_{u,i\alpha\overline{l\delta}}+\Theta_{u}^{i\alpha\overline{j\beta}}\partial_{1}\partial_{\bar{1}}\Theta_{0,i\alpha\overline{j\beta}}+\alpha_{u}^{i\overline{j}}u_{1\overline{1}i\overline{j}}\\
\leq&-\frac{(1-\epsilon_{1})}{\lambda_{1}}\sum_{i}\alpha_{u}^{k\bar{l}}u_{1\bar{i}k}u_{\bar{1}i\bar{l}}+\frac{\frac{1}{\epsilon_{1}}-1}{\lambda_{1}}\sum_{i,\alpha}\Theta_{u}^{k\gamma\overline{l\delta}}\partial_{1}\Theta_{0,k\gamma\overline{i\alpha}}\partial_{\bar{1}}\Theta_{0,i\alpha\overline{l\delta}}\\
&+\Theta_{u}^{i\alpha\overline{j\beta}}\partial_{1}\partial_{\bar{1}}\Theta_{0,i\alpha\overline{j\beta}}+\alpha_{u}^{i\overline{j}}u_{1\overline{1}i\overline{j}},
\end{split}
\end{equation}
$\epsilon_{1}$ will be determined later. Here we have used
\begin{equation*}
\begin{split}
\Theta_{u}^{i\alpha\overline{j\beta}}\partial_{1}\Theta_{u,k\gamma\overline{j\beta}}\Theta_{u}^{k\gamma\overline{l\delta}}\partial_{\bar{1}}\Theta_{u,i\alpha\overline{l\delta}}=&\Lambda_{a}^{-1}\Lambda_{b}^{-1}(\overline{\gamma_{b}^{k\gamma}}\gamma_{a}^{j\beta}\partial_{1}\Theta_{u,k\gamma\overline{j\beta}})\overline{\gamma^{i\alpha}_{a}\overline{\gamma_{b}^{l\delta}}\partial_{1}\Theta_{u,l\delta\overline{i\alpha}}}\\
\geq&\frac{1}{\Lambda_{1}}\sum_{i,\alpha}\Theta_{u}^{k\gamma\overline{l\delta}}\partial_{1}\Theta_{u,k\gamma\overline{i\alpha}}\partial_{\bar{1}}\Theta_{u,i\alpha\overline{l\delta}}\\
\geq&\frac{(1-\epsilon_{1})}{\lambda_{1}}\sum_{i}\alpha_{u}^{k\bar{l}}u_{1\bar{i}k}u_{\bar{1}i\bar{l}}-\frac{\frac{1}{\epsilon_{1}}-1}{\lambda_{1}}\sum_{i,\alpha}\Theta_{u}^{k\gamma\overline{l\delta}}\partial_{1}\Theta_{0,k\gamma\overline{i\alpha}}\partial_{\bar{1}}\Theta_{0,i\alpha\overline{l\delta}}.
\end{split}
\end{equation*}

For $\varphi(|du|_{g}^{2})$, we have
\begin{equation}\label{eq:2}
\begin{split}
\alpha_{u}^{i\overline{j}}\partial_{i}\partial_{\bar{j}}\varphi(|d u|_{g}^{2})=&\alpha_{u}^{i\overline{j}}\varphi^{'}\partial_{i}\partial_{\bar{j}}(|d u|_{g}^{2})+\alpha_{u}^{i\overline{j}}\varphi^{''}\partial_{i}(|d u|_{g}^{2})\partial_{\bar{j}}(|d u|_{g}^{2})\\
= &2\alpha_{u}^{i\overline{j}}\varphi^{'}\sum_{k}({R_{i\bar{j}}}^{k\bar{l}}u_{k}u_{\bar{l}}+2\Re(u_{i\bar{j}k}u_{\bar{k}}))\\
&+2\varphi^{'}\alpha_{u}^{i\bar{i}}(|u_{i\bar{i}}|^{2}+|u_{ii}|^{2})+\varphi^{''}|\partial|d u|_{g}^{2}|_{u}^{2},
\end{split}
\end{equation}
where $|\partial f|_{u}^{2}=\alpha_{u}^{i\bar{j}}f_{i}f_{\bar{j}}$.

For $\psi(u)$, we have
\begin{equation}\label{eq:4}
\begin{split}
\alpha_{u}^{i\bar{j}}\partial_{i}\partial_{\bar{j}}\psi(u)=&\alpha_{u}^{i\bar{j}}\psi^{'}u_{i\bar{j}}+\alpha_{u}^{i\bar{j}}\psi^{''}u_{i}u_{\bar{j}}.
\end{split}
\end{equation}

Since $h(z)$ attains its maximum at $x_{0}\in X$, then
\begin{equation}\label{eq:c1}
\begin{split}
\frac{\partial_{i}(g_{1\overline{1}}^{-1}u_{1\bar{1}})}{\lambda_{1}}+\varphi^{'}\partial_{i}|\partial u|_{0}^{2}+\psi^{'}u_{i}=0.
\end{split}
\end{equation}

Let $\mathcal{S}=\sum_{i,\alpha}\Theta_{u}^{i\alpha\overline{i\alpha}}=\sum_{a}\frac{1}{\Lambda_{a}}$. Since 
\begin{equation*}
\begin{split}
|\Theta^{i\alpha\overline{j\beta}}|=&|\sum_{a}\gamma_{a}^{j\beta}\Lambda_{a}^{-1}\overline{\gamma_{a}^{i\alpha}}|\leq C\mathcal{S},
\end{split}
\end{equation*}
then combining (\ref{eq:1}), (\ref{eq:eq1}), (\ref{eq:eq2}), (\ref{eq:2}), (\ref{eq:4}) we get
\begin{equation}\label{eq:good}
\begin{split}
0\geq &\alpha_{u}^{i\bar{j}}h_{i\bar{j}}-\frac{r\partial_{1}\partial_{\bar{1}}\log\det(\Theta_{u})^{\frac{1}{r}}}{\lambda_{1}}+\frac{\lambda r^{2}u_{1\bar{1}}+r\phi_{1\bar{1}}+r(\log\det(g_{p\bar{q}})_{1\bar{1}}}{\lambda_{1}}\\
\geq&-C_{1}\mathcal{S}-C_{2}\frac{\mathcal{S}}{\lambda_{1}}-\frac{|\partial(g_{1\bar{1}}^{-1}u_{1\bar{1}})|^{2}_{u}}{\lambda_{1}^{2}}+\frac{(1-\epsilon_{1})}{\lambda_{1}^{2}}\sum_{k}\alpha_{u}^{i\bar{j}}u_{1\bar{k}i}u_{\bar{1}k\bar{j}}\\
&-\frac{C_{3}(\frac{1}{\epsilon_{1}}-1)}{\lambda_{1}^{2}}\mathcal{S}+\varphi^{''}|\partial|d u|_{g}^{2}|^{2}_{u}+2\varphi^{'}\alpha_{u}^{i\bar{i}}|u_{i\bar{i}}|^{2}\\
&+4\lambda r^{2}\varphi^{'}|\partial u|_{g}^{2}+4r\varphi^{'}\sum_{k}\phi_{k}u_{\bar{k}}-4\varphi^{'} \Theta_{u}^{i\alpha\overline{j\beta}}\partial_{k}\Theta_{0,i\alpha\overline{j\beta}}u_{\bar{k}}\\
&-C_{4}\varphi^{'}\mathcal{S}|d u|_{g}^{2}+\psi^{'}\alpha_{u}^{i\bar{j}}u_{i\bar{j}}+\psi^{''}|\partial u|_{u}^{2}\\
&+\frac{\lambda r^{2}u_{1\bar{1}}+r\phi_{1\bar{1}}+r\log\det(g_{p\bar{q}})_{1\bar{1}}}{\lambda_{1}}.
\end{split}
\end{equation}
We also have
\begin{equation*}
\begin{split}
\mathcal{S}=\sum_{a}\Lambda_{a}^{-1}\geq \frac{nr}{(\Pi_{a}\Lambda_{a})^{rn}}>C_{5},
\end{split}
\end{equation*}
and
\begin{equation*}
\begin{split}
-\alpha^{i\bar{j}}_{u}u_{i\bar{j}}=\tr(\Theta_{u}^{-1}\Theta_{0})-nr\geq C_{6}\mathcal{S}-nr.
\end{split}
\end{equation*}
All the constants $C_{i}$ above depend only on $A_{0}$, $\omega$ $n$ and $r$.

By (\ref{eq:c1}) we obtain
\begin{equation}
\begin{split}
(\varphi^{'})^{2}|\partial|d u|_{g}^{2}|_{u}^{2}=&\Big|\frac{\partial(g_{1\bar{1}}^{-1} u_{1\bar{1}})}{\lambda_{1}}+\psi^{'}\partial u\Big|_{u}^{2}
\\
\geq&\delta \frac{|\partial(g_{1\bar{1}}^{-1}u_{1\bar{1}})|_{u}^{2}}{\lambda_{1}^{2}}-\frac{\delta(\psi^{'})^{2}}{1-\delta}|\partial u|_{u}^{2}.
\end{split}
\end{equation}
Suppose $\delta$ is small enough such that
\begin{equation*}
\frac{\delta(\psi^{'})^{2}}{1-\delta}\leq \psi^{''}.
\end{equation*}
In fact, we can take $\delta\leq\frac{1}{A+1}$. We also know that
\begin{equation}
\begin{split}
(1-\delta) |\partial(g_{1\bar{1}}^{-1}u_{1\bar{1}})|_{u}^{2}= &(1-\delta)\alpha_{u}^{i\bar{j}}u_{1\bar{1}i}u_{1\bar{1}\bar{j}}.
\end{split}
\end{equation}
Choose $\epsilon_{1}\leq\delta$, then 
\begin{equation*}
-\frac{|\partial(g_{1\bar{1}}^{-1}u_{1\bar{1}})|^{2}_{u}}{\lambda_{1}^{2}}+\frac{(1-\epsilon_{1})}{\lambda_{1}^{2}}\sum_{k}\alpha_{u}^{i\bar{j}}u_{1\bar{k}i}u_{\bar{1}k\bar{j}}+\varphi^{''}|\partial|d u|_{g}^{2}|^{2}_{u}\geq -\psi^{''}|\partial u|_{u}^{2}.
\end{equation*}
Therefore by (\ref{eq:good}), we get
\begin{equation*}
\begin{split}
0\geq &\frac{1}{K}\alpha^{i\bar{i}}_{u}|u_{i\bar{i}}|^{2}+C_{6}\frac{A}{3L}\mathcal{S}-\frac{C_{3}(\frac{1}{\epsilon_{1}}-1)}{\lambda_{1}^{2}}\mathcal{S}-C_{7}\frac{\mathcal{S}}{\lambda_{1}}-C_{8}K^{-\frac{1}{2}}\mathcal{S}\\
&-C_{9}\mathcal{S}-C_{10}K^{-\frac{1}{2}}-C_{11}-\frac{C_{12}}{\lambda_{1}}-\frac{nrA}{L}
\end{split}
\end{equation*}
We may assume $\lambda_{1}\geq\sqrt{\frac{6LC_{3}}{C_{6}}}$, otherwise we get a bounded of $\lambda_{1}$. Then we can take $A$ large enough such that
\begin{equation*}
(\frac{C_{6}}{3L}-\frac{C_{3}}{\lambda_{1}^{2}})A\geq\frac{C_{7}}{\lambda_{1}}+C_{8}K^{-\frac{1}{2}}+C_{9}.
\end{equation*}
Let
\begin{equation*}
A=\frac{\sqrt{6L}C_{7}}{\sqrt{C_{6}C_{3}}}+\frac{6L}{C_{6}}C_{8}K^{-\frac{1}{2}}+\frac{6LC_{9}}{C_{6}},
\end{equation*}
and $\delta=\epsilon_{1}=\frac{1}{1+A}$. Then
\begin{equation*}
C_{6}\frac{A}{3L}\mathcal{S}-\frac{C_{3}(\frac{1}{\epsilon_{1}}-1)}{\lambda_{1}^{2}}\mathcal{S}-C_{7}\frac{\mathcal{S}}{\lambda_{1}}-C_{8}K^{-\frac{1}{2}}\mathcal{S}-C_{9}\mathcal{S}\geq 0,
\end{equation*}
and
\begin{equation*}
\begin{split}
0\geq &\frac{1}{K}\alpha^{i\bar{i}}_{u}|u_{i\bar{i}}|^{2}-C_{13}K^{-\frac{1}{2}}-C_{14}\\
\geq&\frac{1}{K}\alpha^{1\bar{1}}_{u}|u_{1\bar{1}}|^{2}-C_{13}K^{-\frac{1}{2}}-C_{15}\\
\geq& \frac{1}{K}\lambda_{1}-C_{13}K^{-\frac{1}{2}}-C_{15},
\end{split}
\end{equation*}
where $C_{13},C_{14}$ and $C_{15}$ only depends on the background data and $\sup|\phi|$, $\sup|u|$. This implies a bound of $\lambda_{1}$.
\end{proof}

%%%%%%%%%%%%%%%%%%%%%%%%%%%%%%%%%%%%%%%%%%%%%%%%%%%
\section{$C^{1}$-estimates}\label{sec:C1}
In this section, we will use the blow-up argument to provide an estimate for the gradient of the solution to the equation. This method has been used by Chen \cite{Ch} for the Monge-Amp\`ere equation and later by Dinew and Kolodziej \cite{DK} for Hessian equations.

\begin{thm}
Let $u$ be a solution to (\ref{eq:MA}), then there is a constant $C>0$ depends on $\sup_{X}|\phi|$, such that
\begin{equation*}
\sup_{X}|d u|_{g}\leq C.
\end{equation*}
\end{thm}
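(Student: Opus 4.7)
\statementtitle{Proof proposal.}
The plan is to argue by contradiction via a blow-up (rescaling) argument, in the spirit of Chen \cite{Ch} and Dinew--Kolodziej \cite{DK}. First, I would suppose the estimate fails: then there exist solutions $u_k$ of \eqref{eq:MA} with all background data and $\sup_X|\phi_k|$ uniformly controlled, and points $x_k\in X$, such that
\begin{equation*}
M_k:=|du_k|_g(x_k)=\sup_X|du_k|_g\longrightarrow\infty.
\end{equation*}

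Next, I would set up the rescaling. In a normal coordinate chart $z$ centered at $x_k$, valid on a uniform ball of radius $\rho>0$ (by compactness of $X$), define
\begin{equation*}
\tilde u_k(w):=u_k(w/M_k)\quad\text{on }B_{\rho M_k}(0)\subset\mathbb{C}^n.
\end{equation*}
Under this rescaling, the pulled-back metric $\tilde g_k(w)=g(w/M_k)$ converges to the Euclidean metric uniformly on compacta, and a direct computation gives $|d\tilde u_k|_{\tilde g_k}\leq 1$ everywhere with $|d\tilde u_k|_{\tilde g_k}(0)=1$, together with $\sqrt{-1}\partial\bar\partial\tilde u_k(w)=M_k^{-2}\sqrt{-1}\partial\bar\partial u_k(w/M_k)$. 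The $C^2$ estimate from Section \ref{sec:C2} then yields $|\partial\bar\partial\tilde u_k|\leq CM_k^{-2}(1+M_k^2)\leq C$ uniformly in $k$. Combined with the $C^0$ estimate of Section \ref{sec:C0}, the family $\{\tilde u_k\}$ is bounded in $C^{1,1}_{\mathrm{loc}}(\mathbb{C}^n)\cap L^\infty(\mathbb{C}^n)$.

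By Arzel\`a--Ascoli (or by standard elliptic regularity applied to the bounded complex Laplacian), I would extract a subsequence converging in $C^{1,\alpha}_{\mathrm{loc}}(\mathbb{C}^n)$ to a function $u_\infty\in C^{1,1}(\mathbb{C}^n)$, and then verify: (i) $u_\infty$ is bounded on $\mathbb{C}^n$, inherited from the $C^0$ bound; (ii) $u_\infty$ is plurisubharmonic, obtained by passing the rescaled Nakano-positivity condition $M_k^{-2}\Theta_K(w/M_k)+\sqrt{-1}\partial\bar\partial\tilde u_k\otimes Id_E>_{N}0$ to the limit (the curvature term vanishes, leaving $\sqrt{-1}\partial\bar\partial u_\infty\geq 0$); (iii) $|du_\infty|_{\mathrm{Eucl}}(0)=1$ from the $C^1$ convergence at $w=0$. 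Finally, I would invoke the classical Liouville theorem: any plurisubharmonic function on $\mathbb{C}^n$ that is bounded above must be constant (its restriction to any complex line is a bounded subharmonic function on $\mathbb{C}$, hence constant, which forces constancy on all of $\mathbb{C}^n$). Thus $u_\infty$ is constant, contradicting $|du_\infty|(0)=1$.

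The main technical point to handle carefully will be the limiting passage of the Nakano-positivity condition, which in the rescaled variables degenerates into plain plurisubharmonicity of $u_\infty$; once this is clear, the argument is a direct adaptation of the blow-up scheme of \cite{Ch,DK}, made possible by the $C^0$ bound of Section \ref{sec:C0} and, crucially, the $C^2$ bound of Section \ref{sec:C2}, which was the most delicate of the preceding estimates.
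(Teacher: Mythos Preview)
Your blow-up scheme is the same as the paper's up to and including the extraction of a limit $u_\infty\in C^{1,1}_{\mathrm{loc}}\cap L^\infty(\mathbb{C}^n)$ that is plurisubharmonic and satisfies $|du_\infty|(0)=1$. The divergence comes at the very last step.

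The paper does \emph{not} stop at plurisubharmonicity of the limit. It also passes the rescaled equation
\[
{\det}_{TX\otimes E}\bigl(l_j^{-2}\Theta_0+\sqrt{-1}\partial\bar\partial\hat u_j\otimes Id_E\bigr)^{1/r}=l_j^{-2n}e^{\lambda r u_j+\phi_j}\omega^n\longrightarrow 0
\]
to the limit in the pluripotential sense, obtaining $(\sqrt{-1}\partial\bar\partial u_\infty)^n=0$, i.e.\ $u_\infty$ is \emph{maximal}. It then invokes the Liouville theorem of Dinew--Ko\l odziej \cite[Thm.~3.2]{DK}: a bounded maximal psh function on $\mathbb{C}^n$ with bounded gradient is constant.

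You instead appeal to the elementary fact that a psh function on $\mathbb{C}^n$ which is bounded above is already constant (via restriction to complex lines and the classical Liouville theorem for subharmonic functions on $\mathbb{C}$). This is correct, and it genuinely short-circuits the paper's argument: you never need to make sense of the convergence of the Monge--Amp\`ere measures, nor do you need the more refined result of \cite{DK}. The paper's route is modeled on the Hessian-equation template of \cite{DK}, where the limit is only $m$-subharmonic ($m<n$) and boundedness alone does \emph{not} force constancy---there maximality is essential. In the present setting the limiting positivity is full plurisubharmonicity, so your simpler Liouville applies and the detour through maximality is unnecessary. Both arguments are valid; yours is more economical here, while the paper's is the one that would survive if the limiting cone were weaker than the psh cone.
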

\begin{proof}
We will prove this by contradiction. Assume $\phi_{j}$ is a sequence of smooth functions with $\sup_{X}|\phi_{j}|<C_{1}$ and $u_{i}$ is the solution of 
\begin{equation*}
{\det}_{TX\otimes E}(\Theta_{0}+\sqrt{-1}\partial\bar{\partial}u_{j}\otimes Id_{E})^{\frac{1}{r}}=e^{\lambda ru_{j}+\phi_{j}}\omega^{n},
\end{equation*}
satisfies $\sup_{X}|d u_{j}|_{0}=l_{j}\to+\infty$. Let $x_{j}\in X$ be the point such that $|d u_{j}|_{0}(x_{j})=l_{j}$.  Since $X$ is compact, there exists a cluster point $x_{0}$ for $\{x_{j}\}$. Without loss of generality, let $\lim_{j\to\infty}x_{j}= x_{0}$. Let $B_{2}(0)\subset\mathbb{C}^{n}$ be a coordinate chart centered at $x_{0}$ with $g_{i\bar{j}}(x_{0})=\delta_{i\bar{j}}$. For $j$ large enough, we may assume $x_{i}\in B_{1}(0)$. Define
\begin{equation*}
\hat{u}_{j}(z)=u_{j}(x_{j}+\frac{1}{l_{j}}z)\quad\quad \forall z\in B_{l_{j}}(0).
\end{equation*}
Then by $C^{0}$ estimates and $|\partial\bar{\partial}u|\leq C(1+\sup_{X}|du|_{g}^{2})$, we have 
\begin{equation*}
\begin{split}
&\sup_{B_{l_{j}}(0)}|\hat{u}_{j}|\leq C,\quad\quad \sup_{B_{l_{j}}(0)}|d\hat{u}_{j}|<C,\\
&|d\hat{u}_{j}|(0)=1,\quad\quad \sup_{B_{l_{j}}(0)}|\partial\bar{\partial}\hat{u}_{j}|<C.
\end{split}
\end{equation*}
By Sobolev embedding theorem, for any $p > 1$, $\|\hat{u}_{j}\|_{W^{2,p}_{loc}}$ and $\|\hat{u}_{j}\|_{C^{1,\alpha}_{loc}}$ are uniformly bounded. So there exists a subsequence of $\hat{u}_{j}$ that converges in $W^{2,p}_{loc}$ and $C^{1,\alpha}_{loc}$ topologies, to a function $u$ in $\mathbb{C}^{n}$. And
\begin{equation*}
\sup_{\mathbb{C}^{n}}|u|+\sup_{\mathbb{C}^{n}}|du|<C,\quad |d u|(0)\neq 0.
\end{equation*}
On $B_{l_{j}}(0)$, we have
\begin{equation*}
\omega+l_{j}^{2}\sqrt{-1}\partial\bar{\partial}\hat{u}_{j}>0
\end{equation*}
are positive $(1,1)$-forms. These inequalities tell us that the limiting function $u$ is psh. Let $j\to\infty$, then
\begin{equation*}
{\det}_{TX\otimes E}(l_{j}^{-2}\Theta_{0}+\sqrt{-1}\partial\bar{\partial}\hat{u}_{j}\otimes Id_{E})^{\frac{1}{r}}=l_{j}^{-2n}e^{\lambda ru_{j}+\phi_{j}}\omega^{n}\to 0.
\end{equation*}
Then this fact can be read also in the pluripotential sense and thus one can extract the weak limit satisfying
\begin{equation*}
{\det}_{TX\otimes E}(\sqrt{-1}\partial\bar{\partial}u\otimes Id_{E})^{\frac{1}{r}}={\det}_{TX}(\sqrt{-1}\partial\bar{\partial}u)=0.
\end{equation*}
Thus, $u$ is a psh maximal function. A Liouville-type theorem (see \cite[Theorem 3.2]{DK}) states that any bounded, psh maximal function $u$ in $\mathbb{C}^n$ with bounded gradient is constant. This contradicts $|du|(0) \neq 0$.
\end{proof}

\section{$C^{2,\alpha}$-estimate}\label{sec:EK}
In this section, based on the complex version of the Evans-Krylov estimate proposed by Trudinger \cite{Tr}, we present a complete proof of the $C^{2,\alpha}$ estimate for the function $u$ (for additional reference, see also \cite{Siu, TW}).

We consider the following equation
\begin{equation}\label{eq:loc}
\log\det(\Theta_{u})^{\frac{1}{r}} = \psi
\end{equation}
on an open set $\Omega$ in $\mathbb{C}^n$ that contains the ball $B_{2R}$ of radius $2R$. Here $\psi=\lambda r u+\phi+\log\det g_{i\bar{j}}$.  Let $\gamma\in \mathbb{C}^{n}$ be an arbitrary vector. Differentiating equation \eqref{eq:loc} with respect to $\gamma$ and then $\bar{\gamma}$ we obtain
\begin{equation*}
\frac{1}{r}\tr_{TX\otimes E}(\Theta_{u}^{-1}(\Theta_{u})_{\gamma})=\psi_{\gamma}
\end{equation*}
and
\begin{equation*}
\frac{1}{r}\tr_{TX\otimes E}(\Theta_{u}^{-1}(\Theta_{u})_{\gamma\bar{\gamma}})-\frac{1}{r}\tr_{TX\otimes E}(\Theta_{u}^{-1}(\Theta_{u})_{\bar{\gamma}}\Theta_{u}^{-1}(\Theta_{u})_{\gamma})=\psi_{\gamma\bar{\gamma}}.
\end{equation*}
The second term on the left-hand side of the equation is non-positive, and hence we have 
\begin{equation*}
\frac{1}{r}\alpha^{i\bar{j}}_{u}u_{i\bar{j}\gamma\bar{\gamma}} \geq \psi_{\gamma\bar{\gamma}}-\frac{1}{r}\tr_{TX\otimes E}(\Theta_{u}^{-1}\Theta_{0}).
\end{equation*}
Let $w=u_{\gamma\bar{\gamma}}$ and $h=\psi_{\gamma\bar{\gamma}}-\frac{1}{r}\tr_{TX\otimes E}(\Theta_{u}^{-1}\Theta_{0})$, we then have 
\begin{equation}\label{eq:sub}
\frac{1}{r}\alpha^{i\bar{j}}_{u}w_{i\bar{j}} \geq -C_{1},
\end{equation}
where $C_{1}$ depends on $\sup_{\Omega}|\partial\bar{\partial}u|$.

On the other hand, since $\log \det A$ is a concave function defined on the space of positive definite matrices, we know that for any two positive definite matrices $A$ and $B$, the inequality $\log \det A \leq \log \det B + \tr (B^{-1}(A-B))$ holds. Hence for any $x,y\in U$, we have
\begin{equation*}
\log\det (\Theta_{u}(x))^{\frac{1}{r}} \leq \log\det (\Theta_{u}(y))^{\frac{1}{r}}+\frac{1}{r}\tr_{TX\otimes E} (\Theta_{u}^{-1}(y)(\Theta_{u}(x)-\Theta_{u}(y))).
\end{equation*}
It follows that
\begin{equation*}
\tr_{TX\otimes E} (\Theta_{u}^{-1}(y)(\Theta_{u}(y)-\Theta_{u}(x)))\leq r(\psi(y)-\psi(x))\leq C_{2} R,
\end{equation*}
where $C_{2}$ depends on $\sup_{\Omega}|du|$. Since we have a priori estimates on $\sqrt{-1}\partial\bar{\partial u}$ and hence on $\Theta_{u}^{-1}$, by a lemma from linear algebra \cite{Siu}, we can find unit vectors $\widetilde{\gamma}_{1},\cdots,\widetilde{\gamma}_{N}\in \mathbb{C}^{n\times r}$ and real-valued functions $\beta_{1},\cdots,\beta_{N}$ satisfying
\begin{equation*}
0<\frac{1}{C^{*}}\leq \beta_{\nu}\leq C^{*}, \quad \text{for $\nu=1,\cdots, N$, }
\end{equation*}
such that
\begin{equation*}
\Theta_{u}^{i\alpha\overline{j\beta}}(y)=\sum_{\nu=1}^{N}\beta_{\nu}(y)\widetilde{\gamma}_{\nu}^{i\alpha}\overline{\widetilde{\gamma}_{\nu}^{j\beta}}.
\end{equation*}
Let $A_{\nu}^{i\bar{j}}=\sum_{\alpha}(\widetilde{\gamma}_{\nu})^{i\alpha}\overline{(\widetilde{\gamma}_{\nu})^{j\alpha}}$. For any $\nu$, since $\tr(A_\nu) = 1$ and $A_{\nu}$ is positive semi-definite, there exist finitely many positive eigenvalues $\lambda_{\nu,1}, \cdots, \lambda_{\nu,k_\nu}$, with the corresponding eigenvectors $\gamma_{\nu,1}, \cdots, \gamma_{\nu,k_\nu}$. Let $w_{\nu,l}=\gamma_{\nu,l}^{i}\overline{\gamma_{\nu,l}^{j}}u_{i\bar{j}}$ and $w_{\nu}=A_{v}^{i\bar{j}}u_{i\bar{j}}=\sum_{l=1}^{k_{\nu}}\lambda_{\nu,l}w_{\nu,l}$. Then 
\begin{equation*}
\tr_{TX\otimes E} (\Theta_{u}^{-1}(y)(\Theta_{u}(y)-\Theta_{u}(x)))=\sum_{\nu=1}^{N}\beta_{\nu}(y)(w_{\nu}(y)-w_{\nu}(x))+\tr_{TX\otimes E} (\Theta_{u}^{-1}(y)(\Theta_{0}(y)-\Theta_{0}(x))),
\end{equation*}
and 
\begin{equation}\label{eq:ek1}
\sum_{\nu}\sum_{l=1}^{k_{\nu}}\beta_{\nu}(y)\lambda_{\nu,l}(w_{v,l}(y)-w_{\nu,l}(x))=\sum_{\nu=1}^{N}\beta_{\nu}(y)(w_{\nu}(y)-w_{\nu}(x))\leq C_{3}R.
\end{equation}
Similarly, for any unitary matrix $U \in U(n, \mathbb{C})$, setting $\prescript{U}{}{\widetilde{\gamma}}_{\nu}^{i\alpha} = \sum_{j}U^{ij}\widetilde{\gamma}_{\nu}^{j\alpha}$, there also exist $\frac{1}{C^{*}}\leq \prescript{U}{}{\beta}_{1},\cdots,\prescript{U}{}{\beta}_{N}\leq C^{*}$ such that
\begin{equation*}
\Theta_{u}^{i\alpha\overline{j\beta}}(y)=\sum_{\nu=1}^{N}\prescript{U}{}{\beta}_{\nu}(y)\prescript{U}{}{\widetilde{\gamma}}_{\nu}^{i\alpha}\overline{\prescript{U}{}{\widetilde{\gamma}}_{\nu}^{j\beta}}.
\end{equation*}
Then $\prescript{U}{}{A}_{\nu}=UA_{\nu}U^{*}$. Thus, for any unit vector $\gamma \in \mathbb{C}^n$, we may assume that $\gamma_{1,1} = \gamma$. To proceed, we need the following lemma.
\begin{lem}\cite[Thm. 9.22]{GT}\label{l:H}
Let $g$ be a Hermitian metric on $\Omega\subset\mathbb{C}^{n}$ which is uniformly equivalent to the Euclidean metric. Suppose that $v\geq 0$ satisfies
\begin{equation*}
g^{i\bar{j}}v_{i\bar{j}}\leq \theta,
\end{equation*}
on $B_{2R}\subset\Omega$. Then there exist uniform constants $p>0$ and $C>0$ such that
\begin{equation*}
\Big(\frac{1}{R^{2n}}\int_{B_{R}}v^{p}\Big)^{\frac{1}{p}}\leq C(\inf_{B_{R}}v+R\|\theta\|_{L^{2n}(B_{2R})}).
\end{equation*}
\end{lem}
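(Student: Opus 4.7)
The plan is to derive this estimate by reducing to the classical real-variable weak Harnack inequality for nonnegative supersolutions of uniformly elliptic linear equations in non-divergence form.

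First, I would pass from $\mathbb{C}^n$ to $\mathbb{R}^{2n}$ by writing $z^k = x^k + \sqrt{-1}\, y^k$. A direct computation using $\partial_{z^i}\partial_{\bar{z}^j} = \tfrac{1}{4}(\partial_{x^i}+\sqrt{-1}\partial_{y^i})(\partial_{x^j}-\sqrt{-1}\partial_{y^j})$ gives
\begin{equation*}
g^{i\bar{j}} v_{i\bar{j}} = \tfrac{1}{4}\,\tilde{A}^{ab}(x,y)\,\partial_a\partial_b v,
\end{equation*}
where $\tilde{A}^{ab}$ is the real symmetric $(2n\times 2n)$ matrix built from the real and imaginary parts of $g^{i\bar{j}}$, with indices $a,b$ ranging over the $2n$ real coordinates. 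The uniform equivalence of $g$ to the Euclidean Hermitian metric on $\mathbb{C}^n$ is precisely uniform ellipticity of $\tilde{A}$ with constants depending only on the equivalence constants. Hence $v \geq 0$ is a classical supersolution of the real linear uniformly elliptic equation $\tilde{A}^{ab}\partial_a\partial_b v \leq 4\theta$ on $B_{2R} \subset \mathbb{R}^{2n}$.

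Next, I would invoke the real-variable weak Harnack inequality (Theorem 9.22 of Gilbarg-Trudinger in its original form for real uniformly elliptic operators), which for some $p>0$ and $C>0$ depending only on the ellipticity constants and the ambient real dimension $N=2n$ yields
\begin{equation*}
\Big(\frac{1}{|B_R|}\int_{B_R} v^p\Big)^{1/p} \leq C\Big(\inf_{B_R} v + R\,\|\theta\|_{L^{N}(B_{2R})}\Big).
\end{equation*}
Since $|B_R|\asymp R^{2n}$ and $N=2n$ here, this is exactly the desired estimate after absorbing constants; the $L^{2n}$ on the right arises because the correct Lebesgue exponent in the ABP-driven weak Harnack is always the ambient real dimension.

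The one genuine difficulty hides inside the black box of the real weak Harnack. Its proof proceeds by applying the Alexandrov-Bakelman-Pucci maximum principle to the concave envelope of a logarithmic transform $-\log(v+\delta+R\|\theta\|_{L^{2n}})$, producing an estimate on the measure of small level sets in terms of $\inf_{B_R} v$; a Calder\'on-Zygmund cube decomposition then upgrades this local estimate to the distributional decay $|\{v>t\}\cap B_R|\leq C\,t^{-p}|B_R|$, which integrates to the $L^p$ bound for small enough $p$. These Krylov-Safonov ingredients are the deep content; by contrast the complex-to-real reduction carried out above is entirely routine and contributes only constants tracked through the uniform equivalence of $g$ with the Euclidean metric, so citing Gilbarg-Trudinger is all that is actually required in the present application.
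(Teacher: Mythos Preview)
Your proposal is correct, and in fact the paper does not give any proof of this lemma at all: it is stated with a bare citation to \cite[Thm.~9.22]{GT} and then used as a black box in the Evans--Krylov argument. What you have written is precisely the routine complex-to-real reduction that justifies the citation, together with a sketch of the Krylov--Safonov machinery sitting inside the Gilbarg--Trudinger result. (A harmless sign slip: with the standard convention $\partial_{z^i}=\tfrac12(\partial_{x^i}-\sqrt{-1}\,\partial_{y^i})$, so your displayed identity has the $\pm\sqrt{-1}$ interchanged; this does not affect anything since summing against the Hermitian $g^{i\bar j}$ yields the same real symmetric $\tilde A$ either way.)
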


For $s=1,2$, write
\begin{equation*}
\begin{split}
M_{s,\nu,l}=\sup_{B_{sR}} w_{\nu,l}, \quad m_{s,\nu,l}=\inf_{B_{sR}}w_{\nu,l}, \quad \omega(sR)=\sum_{\nu}\sum_{l=1}^{k_{\nu}}(M_{s,\nu,l}-m_{s,\nu,l}).
\end{split}
\end{equation*}
Since each $w_{\nu,l}$ satisfies \eqref{eq:sub}. we can apply Lemma \ref{l:H} to $M_{2,\nu,l}-w_{\nu,l}$ to obtain
\begin{equation}\label{eq:ek3}
\Big(\frac{1}{R^{2n}}\int_{B_{R}}(M_{2,\nu,l}-w_{\nu,l})^{p}\Big)^{\frac{1}{p}}\leq C(M_{2,\nu,l}-M_{1,\nu,l}+R^2).
\end{equation}
Thus for a fixed $(\nu_{0},l_{0})$ we have
\begin{equation}\label{eq:ek2}
\begin{split}
\Big(\frac{1}{R^{2n}}\int_{B_{R}}(\sum_{(\nu,l)\neq (\nu_{0},l_{0})}(M_{2,\nu,l}-w_{\nu,l}))^{p}\Big)^{\frac{1}{p}}\leq&M^{\frac{1}{p}}\sum_{(\nu,l)\neq(\nu_{0},l_{0})}\Big(\frac{1}{R^{2n}}\int_{B_{R}}(M_{2,\nu,l}-w_{\nu,l})^{p}\Big)^{\frac{1}{p}}\\
\leq& C(\sum_{(\nu,l)\neq (\nu_{0},l_{0})}(M_{2,\nu,l}-M_{1,\nu,l})+R^2)\\
\leq &C(\omega(2R)-\omega(R)+R^{2}),
\end{split}
\end{equation}
since 
\begin{equation*}
(M_{2,\nu,l}-m_{2,\nu,l})-(M_{1,\nu,l}-m_{1,\nu,l})=(M_{2,\nu,l}-M_{1,\nu,l})+(m_{1,\nu,l}-m_{2,\nu,l})\geq M_{2,\nu,l}-M_{1,\nu,l}.
\end{equation*}
From \eqref{eq:ek1} we have
\begin{equation*}
\beta_{\nu_{0}}(y)\lambda_{\nu_{0},l_{0}}(w_{\nu_{0},l_{0}}(y)-w_{\nu_{0},l_{0}}(x))\leq CR+\sum_{(\nu,l)\neq(\nu_{0},l_{0})}\beta_{\nu}(y)\lambda_{\nu,l}(w_{\nu,l}(x)-w_{\nu,l}(y)).
\end{equation*}
Hence by choosing $x$ so that $w_{\nu_{0},l_{0}}(x)$ approaches $m_{2,\nu_{0},l_{0}}$ and using the mean value theorem we have
\begin{equation*}
w_{\nu_{0},l_{0}}(y)-m_{2,\nu_{0},l_{0}}\leq C\big(R+\sum_{(\nu,l)\neq(\nu_{0},l_{0})}(M_{2,\nu,l}-w_{\nu,l}(y))\big).
\end{equation*}
Integrating in $y$ over $B_{R}$ and applying \eqref{eq:ek2} we have
\begin{equation}\label{eq:ek4}
\Big(\frac{1}{R^{2n}}\int_{B_{R}}(w_{\nu_{0},l_{0}}-m_{2,\nu_{0},l_{0}})^{p}\Big)^{\frac{1}{p}}\leq C(\omega(2R)-\Omega(R)+R).
\end{equation}
Adding it to \eqref{eq:ek3} and summing over $(\nu, l)$, we obtain 
\begin{equation*}
\omega(2R)\leq C(\omega(2R)-\omega(R)+R),
\end{equation*}
hence
\begin{equation}
\omega(R)\leq \delta\omega(R)+R,
\end{equation}
where $0<\delta=1-\frac{1}{C}<1$. Then by a standard argument (\cite[Lem. 8.23]{GT}) that there exist uniform constants $C$ and $\kappa>0$ such that
\begin{equation*}
\omega(R)\leq CR^{\kappa}.
\end{equation*}
Thus, for any unit vector $\gamma \in \mathbb{C}^n$ and $x, y \in B_R$, we have $|u_{\gamma\bar{\gamma}}(x) - u_{\gamma\bar{\gamma}}(y)| \leq C |x - y|^\kappa$. This yields the Hölder estimate for $\sqrt{-1}\partial\bar{\partial}u$.

\medskip

\statementtitle{Acknowledgement} 
The author would like to thank the referees for their helpful comments. The author are partially supported by NSFC (Grant No. 12141104) and Natural Science Foundation of Jiangsu Province, China (Grant No. BK20241434).

\statementtitle{Conflict of Interest Statement}
The author declare no conflict of interest.

% 数据可用性声明
\statementtitle{Data Availability Statement}
No datasets were generated or analyzed in this theoretical study.

\medskip

%-----------------------------------------------------------------------------
%-----------------------------------------------------------------------------

\end{document}